\documentclass[11pt,a4paper]{article}

\usepackage{titlesec}
\usepackage{fancyhdr}
\usepackage{a4wide}
\usepackage{graphicx}
\usepackage{float}
\usepackage{amssymb}
\usepackage{amsmath}
\usepackage{amsthm}
\usepackage{color}
\usepackage{mathrsfs}
\usepackage{array}
\usepackage{eucal}
\usepackage{tikz}
\usepackage[T1]{fontenc}
\usepackage{inputenc}
\usepackage[english]{babel}
\usepackage{frcursive}
\usepackage{lmodern}
\usepackage{hyperref}
\usepackage{geometry}
\usepackage{changepage}
\geometry{hmargin=2cm, vmargin=2cm }
\changepage{0pt}{}{}{}{}{0pt}{}{0pt}{5pt}
\usepackage[numbers]{natbib}
\setlength{\bibsep}{0.0pt}

\usepackage{hyperref}
\hypersetup{
pdfpagemode=none,
pdftoolbar=true,        
pdfmenubar=true,        
pdffitwindow=false,     
pdfstartview={Fit},    
pdftitle={Perfect transmission invisibility for waveguides with sound hard walls},    
pdfauthor={A.-S. Bonnet-Ben Dhia, L. Chesnel, S.A. Nazarov},     
pdfsubject={},  
pdfcreator={A.-S. Bonnet-Ben Dhia, L. Chesnel, S.A. Nazarov},   
pdfproducer={A.-S. Bonnet-Ben Dhia, L. Chesnel, S.A. Nazarov}, 
pdfkeywords={}, 
pdfnewwindow=true,      
colorlinks=true,       
linkcolor=magenta,          
citecolor=red,        
filecolor=cyan,      
urlcolor=blue           
}

\newcommand{\dsp}{\displaystyle}
\newcommand{\eps}{\varepsilon}
\newcommand{\om}{\omega}
\newcommand{\Om}{\Omega}
\newcommand{\mrm}[1]{\mathrm{#1}}
\newcommand{\Cplx}{\mathbb{C}}
\newcommand{\N}{\mathbb{N}}
\newcommand{\R}{\mathbb{R}}

\newcommand{\mH}{\mrm{H}}

\newcommand{\mX}{\mrm{X}}
\newcommand{\mY}{\mrm{Y}}
\newcommand{\mZ}{\mrm{Z}}
\newcommand{\loc}{\mbox{\scriptsize loc}}

\newtheorem{lemma}{Lemma}[section]
\newtheorem{remark}{Remark}[section]

\newtheorem{proposition}{Proposition}[section]

\begin{document}

~\vspace{0.3cm}
\begin{center}
{\sc \bf\LARGE 
Perfect transmission invisibility for waveguides \\[7pt] with sound hard walls}
\end{center}

\begin{center}
\textsc{Anne-Sophie Bonnet-Ben Dhia}$^1$, \textsc{Lucas Chesnel}$^2$, \textsc{Sergei A. Nazarov}$^{3,\,4,\,5}$\\[16pt]
\begin{minipage}{0.9\textwidth}
{\small
$^1$ Laboratoire Poems, CNRS/ENSTA/INRIA, Ensta ParisTech, Universit\'e Paris-Saclay, 828, Boulevard des Mar\'echaux, 91762 Palaiseau, France; \\
$^2$ INRIA/Centre de math\'ematiques appliqu\'ees, \'Ecole Polytechnique, Universit\'e Paris-Saclay, Route de Saclay, 91128 Palaiseau, France;\\
$^3$ St. Petersburg State University, Universitetskaya naberezhnaya, 7-9, 199034, St. Petersburg, Russia;\\
$^4$ Peter the Great St. Petersburg Polytechnic University, Polytekhnicheskaya ul, 29, 195251, St. Petersburg, Russia;\\
$^5$ Institute of Problems of Mechanical Engineering, Bolshoy prospekt, 61, 199178, V.O., St. Petersburg, Russia.\\[10pt]
E-mails: \texttt{Anne-Sophie.Bonnet-Bendhia@ensta-paristech.fr}, \texttt{lucas.chesnel@inria.fr}, \texttt{srgnazarov@yahoo.co.uk}\\[-14pt]
\begin{center}
(\today)
\end{center}
}
\end{minipage}
\end{center}
\vspace{0.4cm}

\noindent\textbf{Abstract.} 
We are interested in a time harmonic acoustic problem in a waveguide with locally perturbed sound hard walls. We consider a setting where an observer generates incident plane waves at $-\infty$ and probes the resulting scattered field at $-\infty$ and $+\infty$. Practically, this is equivalent to measure the reflection and transmission coefficients respectively denoted $R$ and $T$. In \cite{BoNa13}, a technique has been proposed to construct waveguides with smooth walls such that $R=0$ and $|T|=1$ (non reflection). However the approach fails to ensure $T=1$ (perfect transmission without phase shift). In this work, first we establish a result explaining this observation. More precisely, we prove that for wavenumbers smaller than a given bound $k_{\star}$ depending on the geometry, we cannot have $T=1$ so that the observer can detect the presence of the defect if he/she is able to measure the phase at $+\infty$. In particular, if the perturbation is smooth and small (in amplitude and in width), $k_{\star}$ is very close to the threshold wavenumber. Then, in a second step, we change the point of view and, for a given wavenumber, working with singular perturbations of the domain, we show how to obtain $T=1$. In this case, the scattered field is exponentially decaying both at $-\infty$ and $+\infty$. We implement numerically the method to provide examples of such undetectable defects.\\

\noindent\textbf{Key words.} Invisibility, acoustic waveguide, asymptotic analysis, scattering matrix.

\section{Introduction}\label{Introduction}

Invisibility is a very dynamic field of research in inverse scattering theory. In this article, we are interested in a time harmonic acoustic problem in a waveguide with a bounded transverse section. There is an important literature concerning techniques of imaging for waveguides (see e.g \cite{RoFi00,PAHW07,BoLu08,ArGL11,MoSK12,BoFl14,BoNg15}). Here, we consider a situation where an observer wants to detect the presence of defects in some reference waveguide from far-field data. We assume that the observer is located far from the defect. Practically, the observer generates waves, say from $-\infty$, and measures the amplitude of the resulting scattered field at $\pm\infty$. It is known that, at a given frequency, at $\pm\infty$, the scattered field decomposes as the sum of a finite number of propagative waves plus some exponentially decaying remainder. In this work, we will assume that the frequency is sufficiently small so that only one wave (the piston mode for the problem considered here with sound hard walls) can propagate in the waveguide. In this case, one usually introduces two complex coefficients, namely the \textit{reflection} and \textit{transmission} coefficients, denoted $R$ and $T$, such that $R$ (resp. $T-1$) corresponds to the amplitude of the scattered field at $-\infty$ (resp. $+\infty$) (see (\ref{DecompoChampTotal})). According to the energy conservation, we have
\begin{equation}\label{EnergyConservationOriginial}
|R|^2+|T|^2=1.
\end{equation} 
We shall say that the defects in the reference waveguide are perfectly invisible at a given frequency if there holds $R=0$ and $T=1$. In such a situation, the scattered field is exponentially decaying at $\pm\infty$ and the observer cannot detect the presence of the defect from noisy measurements.\\ 
\newline
In this context, examples of quasi invisible obstacles ($|R|$ small or $|T-1|$ small), obtained via numerical simulations, exist in literature. We refer the reader to \cite{EvMP14} for a water waves problem and to \cite{AlSE08,EASE09,NgCH10,OuMP13,FuXC14} for strategies based on the use of new ``zero-index'' and ``epsilon near zero'' metamaterials in electromagnetism (see \cite{FlAl13} for an application to acoustic). Let us mention also that the problem of the existence of quasi invisible obstacles for frequencies close to the threshold frequency has been addressed in the analysis of the so-called Weinstein anomalies \cite{Vain66} (see e.g.  \cite{na580,KoNS16}).
\\
\newline
Recently in \cite{BoNa13,BLMN17}, (see also \cite{BoNTSu,BoCNSu,ChHS15,ChNa16} for applications to other problems) a technique has been proposed to prove the existence of waveguides different from the straight geometry $\R\times\om$, where $\om$ is the cross section, with sound hard walls such that $R=0$ (rigorously). If an observer located at $-\infty$ generates a propagative wave in such a waveguide, then the scattered field is exponentially decaying at $-\infty$. Therefore, the perturbation of the walls is invisible for backscattering measurements. These waveguides are said to be \textit{non-reflecting}. Now, imagine that the observer located at $-\infty$ can also measure the transmission coefficient $T$. As a consequence of formula (\ref{EnergyConservationOriginial}), when $R=0$,  we have $|T|=1$. In this case, if the observer measures only the amplitude of waves (the modulus of $T$), again the perturbation is invisible. But if he/she can measure the phase of waves (and so, the complex value of $T$), he/she is able to detect the presence of the perturbation when $|T|=1$ and $T\ne 1$. At this point, a natural question is: can we design a defect such that $R=0$ and $T=1$?\\
\newline
Before proceeding further, let us explain briefly the idea of the technique used in \cite{BoNa13} to construct a waveguide $\Om$ such that $R=0$. The method consists in adapting the proof of the implicit function theorem. It has been introduced in \cite{Naza11,Naza11c,Naza12,Naza13,CaNR12,Naza11b}. In these works, the authors construct small regular and singular perturbations of the walls of a waveguide that preserve the multiplicity of the point spectrum on a given interval of the continuous spectrum. For our problem, assume that the geometry of $\Om=\Om(\sigma)$ is defined by a parameter $\sigma$ corresponding to the perturbation of the walls and belonging to a space $\mX$ of smooth and compactly supported functions. We denote $R(\sigma)$ (resp. $T(\sigma)$) the reflection (resp. transmission) coefficient in $\Om(\sigma)$. First, we note that in the reference waveguide, the scattered field associated with an incident wave is null. Therefore, we have $R(0)=0$ and $T(0)=1$. Our goal is to find some $\sigma\not\equiv0$ such that $R(\sigma)=0$. Looking for small perturbations of $\Om(0)$ allows to compute an asymptotic (Taylor) expansion of $R(\sigma)$:
\[
R(\sigma)=R(0)+dR(0)(\sigma)+O(\|\sigma\|^2)=dR(0)(\sigma)+O(\|\sigma\|^2),
\] 
where $dR(0)$, the differential of $R$ at zero, is a linear map from $\mX$ to $\Cplx$. It is easy to find $\sigma_0\in\mX$ such that 
\begin{equation}\label{syshomo}
dR(0)(\sigma_0)=0
\end{equation}  
because (\ref{syshomo}) is a system of two equations (real and imaginary parts) and $\sigma_0$ is a function. A perturbation of the form $\sigma=\eps\sigma_0$ is hard to detect for small $\eps$ because $R(\eps\sigma_0)=O(\eps^2)$. However, it is not perfectly invisible. To impose $R(\sigma)=0$, we look for $\sigma$ under the form $\sigma=\eps(\sigma_0+\tau_{r}\sigma_{r}+\tau_{i}\sigma_{i})$ where $\tau_{r}$, $\tau_{i}$ are some real parameters to tune and where $\sigma_{r}$, $\sigma_i\in\mX$ are such that $dR(0)(\sigma_{r})=1$, $dR(0)(\sigma_{i})=i$ (the latter $i$ is the usual complex number such that $i^2=-1$). Remark that $\sigma_{r}$, $\sigma_i\in\mX$ exist if and only if $dR(0):\mX\to\Cplx$ is onto. If the latter assumption is true, we can impose $R(\sigma)=0$ solving the fixed point problem
\begin{equation}\label{PbFixedPointIntro}
\begin{array}{|l}
\mbox{Find }\tau=(\tau_{r},\tau_{i})^{\top}\in\R^{2}\mbox{ such that }\tau=-(\Re e\,\tilde{R}^{\eps}(\tau),\Im m\,\tilde{R}^{\eps}(\tau))^{\top},
\end{array}~\\[5pt]
\end{equation}
where $\tilde{R}^{\eps}(\tau)\in\Cplx$ is the abstract remainder such that $R(\sigma)=dR(0)(\sigma)+\tilde{R}^{\eps}(\tau)$. Here $\Re e$ and $\Im m$ stand respectively for the real and imaginary parts. For $\eps$ small enough, we can prove that Problem (\ref{PbFixedPointIntro}) admits a  solution. To summarize, this approach allows to construct a (non-trivial) perturbation such that $R(\sigma)=0$ as soon as $dR(0):\mX\to\Cplx$ is onto.\\
\newline
We would like to use the same method to find perturbations such that $T(\sigma)=1$. However, a calculation shows that $dT(0)(\sigma)$ is null for all $\sigma\in\mX$ (see formula (29) in \cite{BoNa13}). As a consequence, $dT(0):\mX\to\Cplx$ is not onto and the technique described above does not apply straightforwardly.\\
\newline
The methodology proposed in \cite{BoNa13} has been adapted in \cite{BoCNSu} to inverse obstacle scattering theory in free space (the  waveguide with a bounded transverse section is replaced by $\R^d$, $d\ge2$). In the latter article, it is explained how to construct defects in a reference medium (defect in the coefficient representing the physical properties of the material instead of defect in the geometry) which are invisible to a finite number of far field measurements. Interestingly, the technique fails when among the directions of observation, there is the incident direction. The reason is the same as above: in this case the differential of the far field pattern with respect to the material perturbation is not onto in $\Cplx$. In \cite{BoCNSu}, it is also shown an additional result, known as the \textit{optical theorem} (see Lord Rayleigh \cite{Rayl71} and \cite[\S10.11]{Jack99},\cite{Newt76,Mans12}). It is proved that imposing far field invisibility in the incident direction requires to impose far field invisibility in all directions. This is very restricting and probably impossible to obtain (see \cite{BlPS14,PaSV14,ElHu15,HuSV16} for related works). At this stage, it seems that configurations where the technique of  \cite{BoCNSu} fails correspond to situations where there is an intrinsic obstruction to invisibility. In the present work, we wish to study whether or not such an intrinsic obstruction to invisibility holds for the waveguide problem with sound hard walls. Is it impossible, like for the problem in free space, to impose $T=1$ or is it just our technique based on the proof of the implicit function theorem which is inefficient? We wish to point out that obtaining $T=1$ can be easily realized in  waveguides with sound soft walls. The reason is that for such problems, one finds that $dT(0)$ is not the null mapping and using the technique described above, \textit{i.e.} playing with small and smooth perturbations, one can construct geometries where $R=0$ and $T=1$.\\ 
\newline
The outline is as follows. In Section \ref{SectionSetting}, we first introduce the notation that will be used throughout the article. Then, in Section \ref{SectionObstruction}, we prove that for a given non trivial waveguide different from the reference setting, there is a $k_{\star}$, depending on the geometry, such that for $k\le k_{\star}$, the transmission coefficient $T$ satisfies $T\ne1$ (Proposition \ref{MainProposition}). In Section \ref{SectionConstruction}, for a given wavenumber, we explain how to construct invisible perturbations such that $T=1$ (Proposition \ref{propositionMainResult}). In Section \ref{SectionNumericalExperiments}, we implement numerically the method to provide examples of such undetectable defects. Section \ref{SectionJustifAsym} is devoted to the justification of intermediate results of the asymptotic procedure allowing to make the analysis rigorous. Finally, in Section \ref{SectionConclu} we give a brief conclusion and describe some routes to investigate as well as open questions. \\
\newline
Let us emphasize that Section \ref{SectionObstruction} and Sections \ref{SectionConstruction},\ref{SectionNumericalExperiments},\ref{SectionJustifAsym} present two different approaches.  They can be read independently. In Section \ref{SectionObstruction}, there is no assumption on the size of the geometrical perturbation. On the other hand, in Sections \ref{SectionConstruction},\ref{SectionNumericalExperiments},\ref{SectionJustifAsym}, we work with chimneys of width $\eps$ small compared to the wavelength. The main results of this work are Proposition \ref{MainProposition} and Proposition \ref{propositionMainResult}.

\section{Setting}\label{SectionSetting}

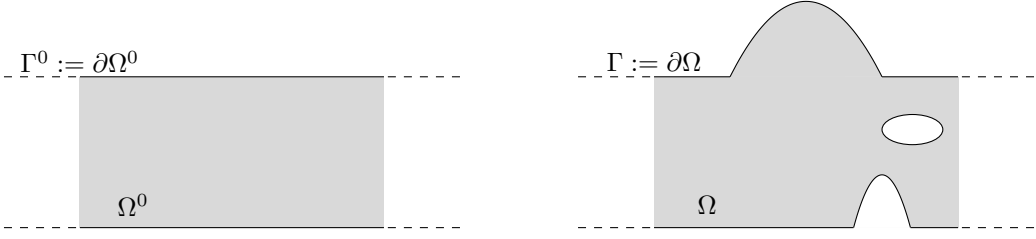
\begin{figure}[!ht]
\centering
\begin{tikzpicture}
\draw[fill=gray!30,draw=none](-2,0) rectangle (2,2);
\draw (-2,0)--(2,0);
\draw (-2,2)--(2,2);
\draw [dashed](-3,0)--(-2,0);
\draw [dashed](3,0)--(2,0);
\draw [dashed](-3,2)--(-2,2);
\draw [dashed](3,2)--(2,2);
\node at (-1.3,0.3){\small$\Omega^0$};
\node at (-2,2.2){\small$\Gamma^0:=\partial\Omega^0$};
\end{tikzpicture}\qquad\qquad
\begin{tikzpicture}
\draw[fill=gray!30,draw=none](-2,0) rectangle (2,2);
\draw (-2,0)--(0.62583426132,0);
\draw (2,0)--(1.37416573868,0);
\draw [white](0.62583426132,0)--(1.37416573868,0);
\draw (-2,2)--(-1,2);
\draw (2,2)--(1,2);
\draw[gray!30](-1,2)--(1,2);
\draw[fill=gray!30,domain=-1:1,smooth,variable=\x] plot ({\x},{-\x*\x+3});
\draw[fill=white,domain=0.62583426132:1.37416573868,smooth,variable=\x] plot ({\x},{-5*(\x-1)*(\x-1)+0.7});
\draw [dashed](-3,0)--(-2,0);
\draw [dashed](3,0)--(2,0);
\draw [dashed](-3,2)--(-2,2);
\draw [dashed](3,2)--(2,2);
\draw[fill=white] (1.4,1.3) ellipse (0.4 and 0.2);
\node at (-1.3,0.3){\small$\Omega$};
\node at (-2,2.2){\small$\Gamma:=\partial\Omega$};
\end{tikzpicture}
\caption{Examples of reference (left) and perturbed (right) waveguides. \label{DomainPerturbation}} 
\end{figure}

\noindent We are interested in the propagation of acoustic waves in time harmonic regime in a waveguide with sound hard walls containing a defect (perturbation of the walls and/or presence of a sound hard obstacle). To model such a problem, introduce $\Om^{0}:=\{z=(x,y)\,|\,x\in\R\mbox{ and }y\in\om\}$ a \textit{reference} waveguide of $\R^d$, $d\ge2$. The cross-section $\om\subset \R^{d-1}$ is a connected open set whose boundary is Lipschitz continuous. Then, introduce $\Om$ a \textit{perturbed} waveguide which coincides with $\Om^{0}$ for $z=(x,y)$ such that $|x|\ge L/2$ (see Figure \ref{DomainPerturbation} for an illustration). Here, $L>0$ is a fixed number. We assume that $\Om$ is connected and that its boundary $\Gamma:=\partial\Om$ is Lipschitz continuous. We denote $\nu$ (resp. $\nu^0$) the normal unit vector to $\Gamma$ (resp. $\Gamma^0:=\partial\Om^0$) directed to the exterior of $\Om$ (resp. $\Om^0$). To model the propagation of waves in $\Om$, we consider the problem
\begin{equation}\label{pbInit}
\begin{array}{|rcll}
-\Delta u  & = & k^2 u & \mbox{ in }\Om\\
\partial_{\nu} u  & = & 0  & \mbox{ on }\Gamma.
\end{array}
\end{equation}
In (\ref{pbInit}), $u$ represents the pressure in the medium filling the waveguide, $k$ corresponds to the wavenumber proportional to the frequency of harmonic oscillations, $\Delta$ is the Laplace operator and $\partial_{\nu} =\nu\cdot\nabla$ denotes the derivative along the outward normal defined almost everywhere on the Lipschitz boundary. Using separation of variables in the reference waveguide $\Om^0$, we can compute the solutions of the problem
\begin{equation}\label{pbInitRef}
\begin{array}{|rcll}
-\Delta u  & = & k^2 u & \mbox{ in }\Om^0\\
\partial_{\nu^0} u  & = & 0  & \mbox{ on }\Gamma^0.
\end{array}
\end{equation}
To provide their expression, let us introduce $\lambda_n$ and $\varphi_n$, the eigenvalues and the corresponding eigenfunctions of the Neumann problem for the Laplace operator on the cross-section $\om$, such  that
\begin{equation}\label{eigenpairTransverse}
\begin{array}{l}
0=\lambda_0<\lambda_1 \le \lambda_2 \le \lambda_3 \le \cdots \le \lambda_n \le \cdots \rightarrow +\infty,\\
\varphi_n\in\mH^1(\om),\qquad(\varphi_m,\varphi_n)_{\om}=\delta_{m,n},\qquad m,n\in\N:=\{0,1,2,\dots\}. 
\end{array}
\end{equation}
Here, $\delta_{m,n}$ stands for the Kronecker symbol and $(\cdot,\cdot)_{\om}$ is the inner product of $\mrm{L}^2(\om)$. In particular, note that $\varphi_0=|\om|^{-1/2}$ where $|\om|=\mrm{meas}_2(\om)$. Assume that $k\in\R$ is such that $k^2\ne \lambda_n$ for all $n\in\N$. We call \textit{modes} of the waveguide the solutions of (\ref{pbInitRef}) given by
\begin{equation}\label{defModes}
w^{\pm}_n(x,y)= (2|\beta_n|)^{-1/2} e^{\pm i \beta_n x}\varphi_n(y)\quad \mbox{ with }\ \beta_n:=\sqrt{k^2-\lambda_n}.
\end{equation}
The complex square root is chosen so that if $\xi=r e^{i\gamma}$ for $r\ge 0$ and $ \gamma \in[0;2\pi)$, then $\sqrt{\xi}=\sqrt{r}e^{i\gamma/2}$. With this choice, there holds $\Im m\,\sqrt{\xi}\ge 0$ for all $\xi\in\Cplx$. The normalization coefficients in (\ref{defModes}) are set to simplify some formulas below (see e.g. (\ref{expressionReflecCoeff})). According to the value of $k^2$ with respect to the $\lambda_n$, the modes $w^{\pm}_n$ adopt different behaviours. In the present work, we shall assume that the wavenumber $k$ verifies 
\begin{equation}\label{AssumptionWaveNber}
0 < k^2 < \lambda_1.
\end{equation}
In this case, for $n\ge 1$, the function $w^{+}_n$ (resp. $w^{-}_n$) decays exponentially at $+\infty$ (resp. $-\infty$) and grows exponentially at $-\infty$ (resp. $+\infty$). On the other hand, the functions $w^{\pm}_0$ are \textit{oscillating waves} in $\Om^0$. To shorten notation, we  denote $w^{\pm}:=w^{\pm}_0$. In $\Om$, the waves $w^{\pm}$ travel from $\mp\infty$, in the positive/negative direction of the axis $(Ox)$ and are scattered by the defect of the waveguide.\\
\newline
Let us denote $\mrm{H}^1_{\loc}(\Om)$ the set of measurable functions whose $\mH^1$-norm is finite on each bounded subset of $\Om$. We will say that a function $v\in\mrm{H}^1_{\loc}(\Om)$ which satisfies equations (\ref{pbInit}) is \textit{outgoing} if it admits the decomposition
\begin{equation}\label{scatteredField}
v = \chi^+ s^{+} w^+ + \chi^- s^{-} w^- + \tilde{v}\ ,
\end{equation}
for some constants $s^{\pm}\in\Cplx$. In (\ref{scatteredField}), $\tilde{v}\in\mH^1(\Om)$ denotes a remainder which is exponentially decaying at $\pm\infty$ while $\chi^{+}\in\mathscr{C}^{\infty}(\Om^0)$ (resp. $\chi^{-}\in\mathscr{C}^{\infty}(\Om^0)$) is a cut-off function that is equal to one for $x\ge L$ (resp. $x\le -L$) and equal to zero for $x\le L/2$ (resp. $x\ge -L/2$). We remind the reader that the constant $L$ is chosen so that $\Om$ coincides with $\Om^{0}$ for $z=(x,y)$ such that $|x|\ge L/2$. We emphasize that $L$ plays absolutely no role in the following. In particular, the results are independent from $L$. Now, the scattering problem we consider states 
\begin{equation}\label{PbChampTotalBIS}
\begin{array}{|rcll}
\multicolumn{4}{|l}{\mbox{Find }u\in\mrm{H}^1_{\loc}(\Om) \mbox{ such that }u-w^+\mbox{ is outgoing and } }\\[3pt]
-\Delta u  & = & k^2 u & \mbox{ in }\Om\\[3pt]
\partial_{\nu} u  & = & 0  & \mbox{ on }\Gamma.\end{array}
\end{equation}
Classically, for all $k^2\in(0;\lambda_1)$, one proves that if $u$ is a solution to (\ref{PbChampTotalBIS}), then we have the decomposition 
\begin{equation}\label{DecompoChampTotal}
u-\chi^-w^+ = \chi^+ T w^+ + \chi^- R w^- + \tilde{u},
\end{equation}
where $\tilde{u}\in\mH^1(\Om)$ is a term which is exponentially decaying at $\pm\infty$. In (\ref{DecompoChampTotal}), $R$ is the reflection coefficient and  $T$ is the transmission coefficient.\\ 
\newline
In the following, $u_s:=u-w^+$ (resp. $u$) will be referred to as the \textit{scattered} (resp. \textit{total}) field associated with the \textit{incident} field $u_i:=w^+$. Note that $u_i=w^+=(2k)^{-1/2}e^{ikx}\varphi_0$ can be naturally extended to $\Om\setminus\overline{\Om^0}$ and that we have $\Delta u_i+k^2u_i=0$ in $\Om$. To simplify notation,  we introduce the coefficients $s^{\pm}$ such that
\[
s^{-} = R\qquad \mbox{ and }\qquad s^{+} = T-1.
\]
With this definition, according to (\ref{DecompoChampTotal}), we have $u_s= \chi^+ s^+ w^+ + \chi^- s^- w^- + \hat{u}$ for some $\hat{u}\in\mH^1(\Om)$ which is exponentially decaying at $\pm\infty$.

\section{Obstruction to transmission invisibility}\label{SectionObstruction}

In this section, we prove that for a given geometry of the waveguide $\Om$, for wavenumbers small enough, the transmission coefficient in the decomposition (\ref{DecompoChampTotal}) cannot be equal to one. The main result is Proposition  \ref{MainProposition}. First we show the following identity.

\begin{lemma}\label{lemmaPrincipal}
If $u$ satisfies Problem (\ref{PbChampTotalBIS}), then the scattered field $u_s=u-u_i$ satisfies the identity
\begin{equation}\label{DesiredIdentity}
\Im m\,s^+ = \int_{\Om}|\nabla u_s|^2 - k^2|u_s|^2\,dz.
\end{equation}
\end{lemma}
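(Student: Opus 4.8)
The plan is to derive the identity from Green's formula on a truncated waveguide and then extract a real part at the very end. First I would fix $a>L$, set $\Om_a:=\Om\cap\{|x|<a\}$ and split $\partial\Om_a=\Gamma_a\cup\Sigma^+_a\cup\Sigma^-_a$, where $\Gamma_a:=\Gamma\cap\{|x|<a\}$ and $\Sigma^{\pm}_a:=\{\pm a\}\times\om$ are the two artificial cross-sections (with outward normals $\pm e_x$). Since $u_i=w^+$ extends to $\Om$ and solves $\Delta u_i+k^2u_i=0$ there, the scattered field $u_s=u-u_i$ satisfies $-\Delta u_s=k^2u_s$ in $\Om$ together with $\partial_{\nu}u_s=-\partial_{\nu}u_i$ on $\Gamma$ (because $\partial_{\nu}u=0$ while $u_i$ does not meet the Neumann condition on the perturbed wall). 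Green's first identity for $u_s$ on $\Om_a$ then gives
\begin{equation*}
\int_{\Om_a}(|\nabla u_s|^2-k^2|u_s|^2)\,dz=\int_{\partial\Om_a}\partial_{\nu}u_s\,\overline{u_s}\,ds .
\end{equation*}
The integrand on the left is globally integrable on $\Om$: the propagating parts $s^{\pm}w^{\pm}$ of $u_s$ satisfy $|\nabla w^{\pm}|^2=k^2|w^{\pm}|^2$ pointwise (recall $\varphi_0$ is constant and $\beta_0=k$), so they contribute nothing, while the remainder $\hat u$ decays exponentially; hence the left-hand side converges to the quantity in (\ref{DesiredIdentity}) as $a\to+\infty$.

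The three boundary pieces I would treat separately. On the cross-sections I insert the modal form $u_s=s^{\pm}w^{\pm}+\hat u$ (valid for $|x|\ge L$); the cross terms involving $\hat u$ vanish in the limit by exponential decay and Cauchy--Schwarz, and a direct computation using $\partial_x w^{\pm}=\pm ik\,w^{\pm}$ and $\int_{\om}\varphi_0^2=1$ yields $\int_{\Sigma^{\pm}_a}\partial_{\nu}u_s\,\overline{u_s}\,ds\to\tfrac{i}{2}|s^{\pm}|^2$. The lateral piece is the delicate one. Using $\partial_{\nu}u_s=-\partial_{\nu}u_i$ and $\overline{u_s}=\overline{u}-\overline{u_i}$, it splits as $-\int_{\Gamma}\partial_{\nu}u_i\,\overline{u}\,ds+\int_{\Gamma}\partial_{\nu}u_i\,\overline{u_i}\,ds$. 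The second term vanishes: since $u_i$ is $y$-independent one has $\partial_{\nu}u_i=ik\,\nu_x u_i$ and $|u_i|^2$ is constant on $\Gamma$, so it reduces to $\int_{\Gamma_a}\nu_x\,ds=0$ (divergence theorem on $\Om_a$, the two cross-section fluxes cancelling).

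It remains to evaluate $\int_{\Gamma}\partial_{\nu}u_i\,\overline{u}\,ds$, and this is where I expect the real work. Since I cannot describe $u$ on the perturbed wall explicitly, I would convert this lateral integral into cross-section integrals by applying Green's second identity to the pair $u_i,\overline{u}$ on $\Om_a$: both solve the Helmholtz equation, so the volume term drops, and the relation $\partial_{\nu}u=0$ on $\Gamma$ lets me express $\int_{\Gamma}\partial_{\nu}u_i\,\overline{u}\,ds$ purely through $\int_{\Sigma^{\pm}_a}(u_i\partial_{\nu}\overline{u}-\overline{u}\,\partial_{\nu}u_i)\,ds$. Evaluating the latter with the far-field forms $u\simeq T w^+$ at $+\infty$ and $u\simeq w^++Rw^-$ at $-\infty$ (again discarding exponentially small remainders), the oscillatory cross terms between distinct modes drop out by orthonormality of the $\varphi_n$, and one finds $\int_{\Gamma}\partial_{\nu}u_i\,\overline{u}\,ds=i(1-\overline{T})$. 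Hence the lateral contribution equals $-i(1-\overline{T})=i\,\overline{s^{+}}$, using $s^{+}=T-1$.

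Collecting the three limits gives the clean exact relation
\begin{equation*}
\int_{\Om}(|\nabla u_s|^2-k^2|u_s|^2)\,dz=i\,\overline{s^{+}}+\tfrac{i}{2}\big(|s^{+}|^2+|s^{-}|^2\big).
\end{equation*}
The left-hand side is real, so the right-hand side must be real too; taking real parts and using $\Re e\,(i\,\overline{s^{+}})=\Im m\,s^{+}$ yields exactly (\ref{DesiredIdentity}). As a by-product, the vanishing of the imaginary part is precisely the energy balance $|R|^2+|T|^2=1$, a reassuring consistency check. The main obstacle is thus the lateral term: everything hinges on trading the unknown normal trace of $u$ on $\Gamma$ for computable cross-section fluxes via the auxiliary Green identity, and on tracking the $\pm e_x$ orientations and the $(2k)^{-1/2}$ normalisation so that the coefficient in front of $\overline{s^{+}}$ comes out to be exactly $i$.
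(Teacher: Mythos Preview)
Your argument is correct and follows essentially the same route as the paper: Green's first identity for $u_s$ on the truncated domain, splitting the lateral term via $\partial_\nu u_s=-\partial_\nu u_i$ and $\overline{u_s}=\overline{u}-\overline{u_i}$, converting $\int_\Gamma\partial_\nu u_i\,\overline{u}$ into cross-section fluxes by Green's second identity for the pair $(u_i,\overline{u})$, and finally taking the real part of the resulting relation $\int_\Om(|\nabla u_s|^2-k^2|u_s|^2)\,dz=i\,\overline{s^+}+\tfrac{i}{2}(|s^+|^2+|s^-|^2)$. The only cosmetic difference is your justification that $\int_\Gamma\partial_\nu u_i\,\overline{u_i}\,ds=0$ via $\int_{\Gamma_a}\nu_x\,ds=0$ and the divergence theorem, whereas the paper integrates by parts over the symmetric difference $(\Om\setminus\overline{\Om^0})\cup(\Om^0\setminus\overline{\Om})$ and uses $|\nabla u_i|^2-k^2|u_i|^2=0$; both are valid.
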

\begin{remark}\label{remarkDef}
Note that the integral on the right hand side of (\ref{DesiredIdentity}) is well-defined. To show this, use the decomposition $u_s= \chi^+ s^+ w^+ + \chi^- s^- w^- + \hat{u}$, where $\hat{u}$ stands for a function which is exponentially decaying at $\pm\infty$, and  notice that there holds $|\nabla w^+|^2 - k^2|w^+|^2=|\nabla w^-|^2 - k^2|w^-|^2=0$ in $\Om$ because $w^{\pm} = (2k)^{-1/2} e^{\pm i k x}\varphi_0$ where $\varphi_0$ is a constant.
\end{remark}
\begin{proof}
For $\ell>L$, define $\Om_\ell:=\{z=(x,y)\in\Om\,|\,-\ell<x<\ell \}$. From the equation $\Delta u_s+k^2 u_s=0$, multiplying by $\overline{u_s}$ and integrating by parts, we find 
\begin{equation}\label{decompo0}
\int_{\Om_\ell}|\nabla u_s|^2 - k^2|u_s|^2\,dz-\int_{\partial\Om\cap\partial\Om_\ell}\partial_{\nu} u_s \overline{u_s}\,d\sigma  -\int_{\Sigma_{-\ell}\cup\Sigma_{+\ell}}\partial_{\nu} u_s \overline{u_s}\,d\sigma= 0. 
\end{equation}
Here, we denote $\Sigma_{\pm\ell}:=\{\pm\ell\}\times\om$ and $\partial_{\nu}=\pm \partial_x$ at $x=\pm\ell$. On $\partial\Om\cap\partial\Om_\ell$, we have $\partial_{\nu} u=0$ which implies  $\partial_{\nu} u_s=-\partial_{\nu} u_i$. Since $\overline{u_s}=\overline{u}-\overline{u_i}$, we obtain
\begin{equation}\label{decompo1}
-\dsp\int_{\partial\Om\cap\partial\Om_\ell}\partial_{\nu} u_s \overline{u_s}\,d\sigma =  \dsp\int_{\partial\Om\cap\partial\Om_\ell}\partial_{\nu} u_i \overline{u}-\partial_{\nu} u_i \overline{u_i}\,d\sigma.
\end{equation}
To deal with the first term in the integrand on the right hand side of (\ref{decompo1}), we can write
\begin{equation}\label{decompo2}
\begin{array}{lcl}
\dsp\int_{\partial\Om\cap\partial\Om_\ell}\partial_{\nu} u_i \overline{u}\,d\sigma & = & \dsp\int_{\partial\Om\cap\partial\Om_\ell}\partial_{\nu} u_i \overline{u}- u_i \partial_{\nu}\overline{u}\,d\sigma \\[10pt]
& = & -\dsp\int_{\Sigma_{-\ell}\cup\Sigma_{+\ell}}\partial_{\nu} u_i \overline{u}-u_i \partial_{\nu}\overline{u}\,d\sigma.
\end{array}
\end{equation}
On the other hand, for the second term of the integrand on the right hand side of (\ref{decompo1}), integrating by parts, we get
\begin{equation}\label{decompo3}
\begin{array}{lcl}
-\dsp\int_{\partial\Om\cap\partial\Om_\ell}\partial_{\nu} u_i \overline{u_i}\,d\sigma & = & \dsp\int_{(\Om\setminus\overline{\Om^0})\cup(\Om^0\setminus\overline{\Om})} |\nabla u_i|^2+\Delta u_i \overline{u_i}\,dz \\[10pt]
 & = & \dsp\int_{(\Om\setminus\overline{\Om^0})\cup(\Om^0\setminus\overline{\Om})} |\nabla u_i|^2-k^2 |u_i|^2\,dz \ =\ 0.
\end{array}
\end{equation}
The last equality in (\ref{decompo3}) has been obtained by using again the expression $u_i=w^+ = (2k)^{-1/2} e^{i k x}\varphi_0$. Plugging (\ref{decompo1}) in (\ref{decompo0}) and using (\ref{decompo2}), (\ref{decompo3}) yields  
\begin{equation}\label{decompo4}
\int_{\Om_\ell}|\nabla u_s|^2 - k^2|u_s|^2\,dz=\dsp\int_{\Sigma_{-\ell}\cup\Sigma_{+\ell}}\partial_{\nu} u_i \overline{u}- u_i \partial_{\nu}\overline{u}+\partial_{\nu}u_s \overline{u_s}\,d\sigma. 
\end{equation}
From the decomposition $u_s=u-u_i= \chi^+ s^+ w^+ + \chi^- s^- w^- + \hat{u}$, where $\hat{u}$ is exponentially decaying at $\pm\infty$, we find
\begin{equation}\label{limit1}
\lim_{\ell\to+\infty}\dsp\int_{\Sigma_{+\ell}}\partial_{\nu} u_i \overline{u}-u_i \partial_{\nu}\overline{u}+\partial_{\nu} u_s \overline{u_s}\,d\sigma = i(1+\overline{s^+}+\frac{1}{2}\,|s^+|^2)
\end{equation}
and 
\begin{equation}\label{limit2}
\lim_{\ell\to+\infty}\dsp\int_{\Sigma_{-\ell}}\partial_{\nu} u_i \overline{u}-u_i \partial_{\nu}\overline{u}+\partial_{\nu} u_s \overline{u_s}\,d\sigma = i(-1+\frac{1}{2}\,|s^-|^2).
\end{equation}
Taking the limit as $\ell\to+\infty$ in (\ref{decompo4}) and using (\ref{limit1}), (\ref{limit2}) leads to 
\begin{equation}\label{Finalequality}
\int_{\Om}|\nabla u_s|^2 - k^2|u_s|^2\,dz= i(\overline{s^+}+\frac{1}{2}\,(|s^+|^2+|s^-|^2)). 
\end{equation}
Extracting the real part of (\ref{Finalequality}) gives the desired identity (\ref{DesiredIdentity}). 
\end{proof}
\begin{proposition}\label{MainProposition}
There is some $k_{\star}\in(0;\sqrt{\lambda_1})$ such that for all $k\in(0;k_{\star}]$, the transmission coefficient $T$ appearing in the decomposition (\ref{DecompoChampTotal}) of $u$, a solution to Problem (\ref{PbChampTotalBIS}), satisfies $T\ne 1$. 
\end{proposition}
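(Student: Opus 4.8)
The starting point is Lemma \ref{lemmaPrincipal}: if $T=1$ then $s^+=T-1=0$, so identity (\ref{DesiredIdentity}) collapses to $\int_{\Om}|\nabla u_s|^2-k^2|u_s|^2\,dz=0$. The plan is to show that for $k$ small this quantity is \emph{strictly positive} whenever $u_s\not\equiv0$, which contradicts the assumption and forces $T\ne1$. First I would record two consequences of $T=1$. By the energy balance $|R|^2+|T|^2=1$ (equation (\ref{EnergyConservationOriginial}), equivalently the imaginary part of (\ref{Finalequality})), the equality $T=1$ forces $R=s^-=0$; hence $u_s=\hat{u}\in\mH^1(\Om)$ decays exponentially at $\pm\infty$, and in particular $u_s\in\mL^2(\Om)$. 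Moreover $u_s\not\equiv0$, since otherwise $u=u_i=w^+$ would satisfy $\partial_{\nu}w^+=0$ on all of $\Gamma$, which is impossible on the perturbed part of the boundary as $\Om\ne\Om^0$. Consequently (\ref{DesiredIdentity}) rewrites as $k^2=\big(\int_{\Om}|\nabla u_s|^2\,dz\big)\big/\big(\int_{\Om}|u_s|^2\,dz\big)$, a Rayleigh quotient which I will bound from below.

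The key structural fact is that $u_s$ carries \emph{no piston component} in the straight parts. Indeed, for $\pm x>L$ the field $u_s$ solves $-\Delta u_s=k^2u_s$ with homogeneous Neumann conditions on the cylinder $\R\times\om$; testing against $\varphi_0$ (constant, with $\lambda_0=0$) shows that $\psi(x):=\int_\om u_s(x,\cdot)\,\varphi_0\,dy$ solves $\psi''+k^2\psi=0$, whose only square-integrable solution on a half-line is $\psi\equiv0$. Thus $\int_\om u_s(x,\cdot)\,\varphi_0\,dy=0$ for all $|x|\ge L$, and by continuity of $\psi$ the traces of $u_s$ on $\Sigma_{\pm L}:=\{\pm L\}\times\om$ are orthogonal to $\varphi_0$ in $\mL^2(\om)$.

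Next I would introduce the constrained infimum
\[
\kappa:=\inf\Big\{\frac{\int_{\Om}|\nabla v|^2\,dz}{\int_{\Om}|v|^2\,dz}\ :\ v\in\mH^1(\Om)\setminus\{0\},\ \int_\om v(x,\cdot)\,\varphi_0\,dy=0\ \text{ for }|x|\ge L\Big\},
\]
and show $\kappa>0$. Split $\Om$ into the two half-guides $\{\pm x>L\}$ and the bounded middle piece $\Om_L:=\{|x|<L\}\cap\Om$. On each half-guide the constraint $v(x,\cdot)\perp\varphi_0$, combined with the transverse Poincaré inequality $\int_\om|\nabla_y g|^2\ge\lambda_1\int_\om|g|^2$ valid for $g\perp\varphi_0$, gives $\int|\nabla v|^2\ge\lambda_1\int|v|^2$. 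On $\Om_L$ I would argue by compactness: a minimizing sequence for the Neumann Rayleigh quotient with vanishing gradient in the limit converges in $\mL^2(\Om_L)$ to a constant, but constants are excluded by the zero-mean trace condition on $\Sigma_{\pm L}$ (inherited by passing to the weak $\mH^1(\Om_L)$ limit and using continuity of $g\mapsto\int_\om g\,\varphi_0$ on $\mL^2(\Sigma_{\pm L})$); hence the quotient on $\Om_L$ is bounded below by some $\mu>0$. Therefore $\kappa\ge\min(\lambda_1,\mu)>0$. Since $u_s$ lies in the constrained set by the previous paragraph, we get $k^2\ge\kappa$, and choosing any $k_{\star}$ with $0<k_{\star}^2<\min(\kappa,\lambda_1)$ yields the contradiction for every $k\in(0;k_{\star}]$; note that for small smooth perturbations $\mu$ is close to $\lambda_1$, so $k_{\star}$ sits near the threshold $\sqrt{\lambda_1}$, as announced.

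The main obstacle is this last coercivity step, namely proving $\kappa>0$ on the bounded middle region: on a Neumann domain the Rayleigh quotient vanishes on constants, so positivity hinges entirely on correctly propagating the zero-piston condition from the cylinders to the traces on $\Sigma_{\pm L}$ and checking it survives the weak limit. The half-guide estimate and the reduction via Lemma \ref{lemmaPrincipal} are comparatively routine; the delicate points are the continuity of the transverse mean $\psi$ at $x=\pm L$ and the non-compactness of the trace embedding, which is why I would phrase the middle estimate as a constrained eigenvalue problem and invoke weak lower semicontinuity rather than a direct Poincaré inequality.
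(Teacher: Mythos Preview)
Your proof is correct and follows essentially the same route as the paper: reduce via Lemma \ref{lemmaPrincipal} and energy conservation to the vanishing of the energy quadratic form on $u_s$, split $\Om$ into the two half-guides (where the absent piston component yields a transverse Poincar\'e bound) plus a bounded middle piece (where the zero-mean trace constraint on $\Sigma_{\pm L}$ gives a positive constrained first eigenvalue), and derive the contradiction from $\partial_\nu w^+\not\equiv0$ on the perturbed boundary. The only cosmetic difference is that the paper identifies the middle-piece constant explicitly as the first eigenvalue $\mu_1$ of a concrete mixed Neumann problem (equation (\ref{EigenPb})), giving a computable lower bound for $k_\star$, whereas you argue the positivity abstractly by compactness; also, you place the $u_s\not\equiv0$ step at the beginning while the paper derives $u_s\equiv0$ first and reaches the same boundary contradiction at the end.
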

\begin{remark}\label{RmkCrack}
We assumed that $\Om$ has a Lipschitz continuous boundary. This condition can be weakened. In our proof (see in particular the penultimate line), we just need that the classical Green formula holds in $\Om$ and that $\partial\Om$ contains a set of non zero measure where $\nu_x\ne0$ (we use the notation $\nu=(\nu_x,\nu_y)^{\top}\in\R\times\R^{d-1}$). Observe that the latter assumption on $\partial\Om$ excludes the case where $\Om=\Om^0\setminus\overline{\mathcal{F}}$, $\mathcal{F}$ being a family of cracks parallel to the $(Ox)$ axis. This is reassuring because in this setting the result of Proposition \ref{MainProposition} clearly does not hold. Indeed, for all wavenumbers
the incident field $u_i$ goes through the waveguide without producing any scattered field ($u_s=0$ so that $T=1$). 
\end{remark}
\begin{proof}
Assume that $T$ satisfies $T=1\Leftrightarrow s^+=0$. In this case, the conservation of energy $1=|T|^2+|R|^2$ imposes $R=s^-=0$. Lemma \ref{lemmaPrincipal} above implies
\begin{equation}\label{NullEnergy}
0=\int_{\Om}|\nabla u_s|^2 - k^2|u_s|^2\,dz=\int_{\Om\setminus\overline{\Om_b}}|\nabla u_s|^2 - k^2|u_s|^2\,dz+\int_{\Om_b}|\nabla u_s|^2 - k^2|u_s|^2\,dz.
\end{equation}
Here $\Om_b=\{z=(x,y)\in\Om\,|\,-x_-<x<x_+ \}$ where $x_\pm$ are chosen so that $\Om$ coincides with $\Om^{0}$ for $z=(x,y)$ such that $\pm x\ge x_{\pm}$. Now, we estimate each of the two terms on the right hand side of (\ref{NullEnergy}).\\
\newline
$\star$ Let us consider the first one. For $\pm x> x_{\pm}$, classically, we can decompose $u_s$ as 
\[
u_s(x,y) = s^{\pm}w^{\pm}(x,y)+\sum_{n=1}^{+\infty} \alpha^{\pm}_n w^{\pm}_n(x,y).
\]
In this expression, according to (\ref{defModes}), we have 
\[
w^{\pm}(x,y) = \frac{1}{\sqrt{2k}}\,e^{\pm i k x}\varphi_0(y)\qquad \mbox{ and, for $n\ge1$, }\qquad  w^{\pm}_n(x,y) = \frac{1}{\sqrt{2|\beta_n|}}\,e^{\mp |\beta_n| x}\varphi_n(y),
\]
Moreover, there holds 
\begin{equation}\label{defCoeff}
s^{\pm} = \sqrt{2k} e^{-ikx_{\pm}}\int_{\{\pm x_{\pm}\}\times\om} u_s\varphi_0\,d\sigma\quad \mbox{and, for $n\ge1$,}\quad\alpha^{\pm}_n=\sqrt{2|\beta_n|} e^{|\beta_n|x_{\pm}}\int_{\{\pm x_{\pm}\}\times\om} u_s\varphi_n\,d\sigma.
\end{equation}
Define the semi-infinite cylinders $\Om_+=(x_+;+\infty)\times\om$ and $\Om_{-}=(-\infty;-x_-)\times\om$. A direct calculation using the orthonormality of the family $(\varphi_n)_{n\ge0}$ yields
\begin{equation}\label{estim1}
\begin{array}{lcl}
\dsp\int_{\Om\setminus \overline{\Om_b}} |\nabla u_s|^2 - k^2|u_s|^2\,dz & = & \dsp\sum_{n=1}^{+\infty} 2(\lambda_n-k^2)\Big(|\alpha^{+}_n|^2 \dsp\int_{\Om_+}|w^{+}_n|^2\,dz+|\alpha^{-}_n|^2 \dsp\int_{\Om_-}|w^{-}_n|^2\,dz\Big)\\[10pt]
& \ge & 2(\lambda_1-k^2)\Big( \dsp\int_{\Om_+}|u_s|^2\,dz+\dsp\int_{\Om_-}|u_s|^2\,dz\Big). 
\end{array}
\end{equation}
$\star$ Now, we deal with the second term on the right hand side of (\ref{NullEnergy}). Above, we have seen that $s^{+}=0$ implies $s^{-}=0$. Then, according to (\ref{defCoeff}), there holds $\int_{\{x_{+}\}\times\om} u_s\,d\sigma=\int_{\{-x_{-}\}\times\om} u_s\,d\sigma=0$. Define the Hilbert space $\mY:=\{\psi\in\mH^1(\Om_b)\,|\,\int_{\{x_{+}\}\times\om}\psi\,d\sigma=\int_{\{-x_{-}\}\times\om} \psi\,d\sigma=0\}$.
\begin{lemma}
We have the Poincar\'e inequality
\begin{equation}\label{PoincareMixt}
\int_{\Om_b} |\psi|^2\,dz \le \frac{1}{\mu_1}\,\int_{\Om_b} |\nabla\psi|^2\,dz,\qquad \forall \psi\in\mY,
\end{equation}
where $\mu_1>0$ is the smallest eigenvalue of the problem
\begin{equation}\label{EigenPb}
\begin{array}{|rcll}
\multicolumn{4}{|l}{\mbox{Find }(\mu,\zeta)\in \R\times (\mY\setminus\{0\}) \mbox{ such that } }\\[4pt]
-\Delta \zeta  & = & \mu \zeta & \mbox{ in }\Om_b\\[2pt]
\partial_{\nu} \zeta  & = & 0  & \mbox{ on }\partial \Om_b\cap\partial\Om\\[2pt]
\multicolumn{4}{|l}{\phantom{o}\partial_{\nu} \zeta\mbox{ is constant on }\{ x_{+}\}\times\om}\\[2pt]
\multicolumn{4}{|l}{\phantom{o}\partial_{\nu} \zeta\mbox{ is constant on }\{-x_{-}\}\times\om.}
\end{array}
\end{equation}
Here, $\nu$ stands for the normal unit vector to $\partial \Om_b$ directed to the exterior of $\Om_b$.
\end{lemma}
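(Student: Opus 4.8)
The plan is to read (\ref{PoincareMixt}) as the assertion that the first eigenvalue of a constrained Rayleigh quotient is strictly positive, and to identify that eigenvalue with the smallest eigenvalue of (\ref{EigenPb}). Concretely, I would set
\[
\mu_1 := \inf_{\psi\in\mY\setminus\{0\}} \frac{\dsp\int_{\Om_b}|\nabla\psi|^2\,dz}{\dsp\int_{\Om_b}|\psi|^2\,dz},
\]
so that the Poincar\'e inequality (\ref{PoincareMixt}) holds by construction as soon as $\mu_1>0$. The lemma then splits into two tasks: showing that this infimum is attained by a nonzero $\zeta\in\mY$ solving (\ref{EigenPb}) with eigenvalue $\mu_1$, and showing that $\mu_1>0$.

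For attainment, I would note that $\mY$ is a closed subspace of $\mH^1(\Om_b)$, being the kernel of the two continuous linear functionals $\psi\mapsto\int_{\{x_+\}\times\om}\psi\,d\sigma$ and $\psi\mapsto\int_{\{-x_-\}\times\om}\psi\,d\sigma$ (continuity coming from the boundedness of the trace map onto each end face). Taking a minimizing sequence normalized in $\mL^2(\Om_b)$, it is bounded in $\mH^1(\Om_b)$; up to a subsequence it converges weakly in $\mH^1$ and, by the compact Rellich embedding $\mH^1(\Om_b)\hookrightarrow\mL^2(\Om_b)$ (valid since $\Om_b$ is bounded with Lipschitz boundary), strongly in $\mL^2$. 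The limit $\zeta$ lies in the weakly closed set $\mY$, is normalized hence nonzero, and by weak lower semicontinuity of the Dirichlet energy it realizes the infimum. Writing the first variation $\int_{\Om_b}\nabla\zeta\cdot\nabla\phi\,dz=\mu_1\int_{\Om_b}\zeta\phi\,dz$ for all $\phi\in\mY$, I would recover (\ref{EigenPb}): testing with $\phi\in\mathscr{C}^\infty_c(\Om_b)$ gives $-\Delta\zeta=\mu_1\zeta$ in $\Om_b$; then, since $\Delta\zeta\in\mL^2(\Om_b)$, the normal trace is well defined and integration by parts against general $\phi\in\mY$ yields $\partial_\nu\zeta=0$ on the lateral walls $\partial\Om_b\cap\partial\Om$ (where the trace of $\phi$ is unconstrained), and $\partial_\nu\zeta$ constant on each end face (since there the admissible traces fill exactly the mean-zero subspace of $\mL^2(\om)$, whose orthogonal complement is the constants — the two constants being the Lagrange multipliers of the two constraints defining $\mY$). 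Conversely, testing any eigenpair of (\ref{EigenPb}) with its own eigenfunction shows every eigenvalue is $\ge\mu_1$, so $\mu_1$ is indeed the smallest eigenvalue.

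The crux, as I see it, is the strict positivity of $\mu_1$. Certainly $\mu_1\ge0$. If $\mu_1=0$, the minimizer satisfies $\nabla\zeta=0$ almost everywhere, hence $\zeta$ is constant on the connected set $\Om_b$; but the constraint $\int_{\{x_+\}\times\om}\zeta\,d\sigma=0$ forces that constant to vanish, contradicting $\zeta\ne0$. Therefore $\mu_1>0$ and (\ref{PoincareMixt}) follows. I expect this final step to carry all the content: the inequality is plainly false on the unconstrained space $\mH^1(\Om_b)$, where constants give Rayleigh quotient zero, and it is precisely the two end-face mean-value conditions built into $\mY$ that eliminate the constants and open a positive spectral gap. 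The only mild technical point to verify is the connectedness of $\Om_b$, which holds for the waveguide geometry considered here since each end face is a full copy of $\om$.
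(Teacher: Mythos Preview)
Your proof is correct. In fact the paper states this lemma without proof, treating it as a standard Rayleigh-quotient characterization of the first eigenvalue of the variational problem on the constrained space $\mY$; your argument supplies exactly the details one would expect (closedness of $\mY$ via trace continuity, attainment by Rellich compactness, identification of the Euler--Lagrange boundary conditions as Neumann on the lateral wall and constant normal derivative on the end faces via Lagrange multipliers, and positivity of $\mu_1$ because the mean-zero constraints exclude nonzero constants). There is nothing to compare against; you have filled the gap the authors chose to leave.
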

\noindent Applied to $u_s\in\mY$, estimate (\ref{PoincareMixt}) gives 
\begin{equation}\label{relation1bis}
\int_{\Om_b}|\nabla u_s|^2 - k^2|u_s|^2\,dz \ge (\mu_1- k^2)\int_{\Om_b} |u_s|^2\,dz.
\end{equation}
$\star$ Using (\ref{estim1}) and (\ref{relation1bis}) in (\ref{NullEnergy}), we obtain 
\begin{equation}\label{antepenul}
\begin{array}{lcl}
0  \ge 2(\lambda_1-k^2)\Big( \dsp\int_{\Om_+}|u_s|^2\,dz+\dsp\int_{\Om_-}|u_s|^2\,dz\Big) +(\mu_1-k^2)\dsp\int_{\Om_b} |u_\mrm{s}|^2\,dz.
\end{array}
\end{equation}
According to (\ref{antepenul}), for $k^2<\lambda_1$ such that $k^2\le\mu_1$, we must have $u_s\equiv0$. From the condition $\partial_{\nu} u=0$ on $\partial\Om\cap\partial\Om_b$ and the decomposition $u=u_s+u_i$, we infer $\partial_{\nu} u_i=\partial_{\nu} e^{ikx}=0$ on $\partial\Om\cap\partial\Om_b$. Thus, if we denote $\nu=(\nu_x,\nu_y)^{\top}\in\R\times\R^{d-1}$, we must have $\nu_xe^{ikx}=0$ on $\partial\Om\cap\partial\Om_b$. This is possible if and only if the defect is a family of cracks $\mathcal{F}$ parallel to the $(Ox)$ axis, which is excluded (see Remark \ref{RmkCrack}).
\end{proof}
\begin{remark}
Note that to derive the strongest result in the statement of Proposition \ref{MainProposition}, that is to obtain the largest constant $k_{\star}$, one has to choose $x_{\pm}$ right after (\ref{NullEnergy}) to get the largest first eigenvalue $\mu_1$ for Problem (\ref{EigenPb}).  However, since monotonicity results do not exist in general for Neumann eigenvalue problems (see e.g. \cite[\S1.3.2]{Henr06}), it is not clear how to set $x_{\pm}$. However, the above approach at least provides a lower bound for $k_{\star}$.
\end{remark}

\begin{remark}
Consider $(\Phi^{\eps})_{\eps\ge0}$ a sequence of smooth diffeomorphisms of $\R^d$ such that $\mrm{Id}-\Phi^{\eps}$ is compactly supported and such that $\|\mrm{Id}-\Phi^{\eps}\|_{\mrm{W}^{2,\infty}(\R^d)}\le \eps$ for small $\eps$. We also assume that the differential of $\Phi^{\eps}$ is uniformly bounded as $\eps$ goes to zero. Denote $\Om^{\eps}$ the image of the reference waveguide $\Om^0$ by $\Phi^{\eps}$. There is some $\ell>0$ such that $\Om^{\eps}$ coincides with $\Om^0$ for $z=(x,y)$ satisfying $|x|\ge \ell/2$. Let $\mu^{\eps}_1$ refer to the first eigenvalue of  Problem (\ref{EigenPb}) set in $\Om^{\eps}_{\ell/2}:=\{z\in\Om^{\eps}\,|\,|x|<\ell/2\}$. Then, according to the results of continuity of the Neumann eigenvalues of the Laplace operator with respect to smooth perturbations of the domain (see \cite[Theorem 2.3.25]{Henr06}), we have $|\mu^{\eps}_1-\mu^{0}_1| \le C\,\eps$, for some $C>0$, where $\mu^{0}_1=\min((\pi/\ell)^2,\lambda_1)$\footnote{Indeed, by a straightforward calculation, we find that the first eigenvalue of Problem (\ref{EigenPb}) set in $(-\ell/2;\ell/2)\times\om$ is equal to $\min((\pi/\ell)^2,\lambda_1)$.}. According to Proposition \ref{MainProposition}, we deduce that it is impossible to have $T=1$ for $k^2\in(0;\mu^{0}_1-C\,\eps)$. This shows that the technique considered in \cite{BoNa13}, based on the used of smooth and small perturbations of $\Om^0$ with a support of given width, cannot be efficient to construct waveguides such that $T=1$ for all wavenumbers $k\in(0;\lambda_1)$. 
\end{remark}

\section{Construction of perfectly invisible defects}\label{SectionConstruction}

\noindent In the previous section, we proved that for a given waveguide with sound hard walls (different from the reference waveguide with possible cracks $\mathcal{F}$ parallel to the $(Ox)$ axis), for wavenumbers smaller than a given bound $k_{\star}$, depending on the geometry, incident propagative waves always produce a scattered field which is not exponentially decaying  at $+\infty$. In particular, if the perturbation is smooth and small (in amplitude and in width), $k_{\star}$ is very close to the threshold wavenumber $\lambda_1$. In this section, we change the point of view. We choose any wavenumber between $0$ and the first threshold so that only one wave can propagate in the waveguide from $-\infty$ to $+\infty$. And we explain how to construct a waveguide with sound hard walls such that the scattered field associated with the incident field is exponentially decaying both at $-\infty$ and $+\infty$. To circumvent the obstruction established in the previous section, we will work with singular perturbations of the geometry. We will see that with the type of perturbations considered in this section, $k_{\star}$ can be made as small as we want (see Remark \ref{RmkObstruction} for details). We emphasize that here, the locution ``singular perturbation'' means that the perturbation of the geometry is such that a boundary layer phenomenon occurs. This does not mean that the geometry is not smooth (though we will consider a geometry with corners). For simplicity of exposition, we shall consider a basic 2D geometrical framework. Everything presented here can be extended to other settings.

\subsection{Setting of the problem}\label{SectionWaveguide}

\begin{figure}[!ht]
\centering
\begin{tikzpicture}[scale=2]
\draw[fill=gray!30,draw=none](-2,0.5) rectangle (2,2);
\draw (-2,0.5)--(2,0.5); 
\draw[dashed] (-3,0.5)--(-2,0.5); 
\draw[dashed] (3,0.5)--(2,0.5); 
\draw (-2,2)--(2,2);  
\draw[dashed] (3,2)--(2,2);
\draw[dashed] (-1.8,0.3)--(-1.8,2.2);
\draw[dashed] (1.8,0.3)--(1.8,2.2);
\draw[dashed] (-3,2)--(-2,2);

\draw[dotted,>-<] (-0.2,3.9)--(0.2,3.9);
\draw[dotted,>-<] (-0.8,3.4)--(-1.2,3.4);
\draw[dotted,>-<] (0.8,3.6)--(1.2,3.6);

\draw[dotted,>-<] (0.3,1.95)--(0.3,3.85);
\draw[dotted,>-<] (-0.7,1.95)--(-0.7,3.35);
\draw[dotted,>-<] (1.3,1.95)--(1.3,3.55);

\node at (0,4){\small $\eps$};
\node at (-1,3.5){\small $\eps$};
\node at (1,3.7){\small $\eps$};

\node at (-0.55,2.65){\small $h_1$};
\node at (0.45,2.9){\small $h_2$};
\node at (1.45,2.75){\small $h_3$};

\draw[fill=gray!30](-0.1,2) rectangle (0.1,3.8);
\draw[fill=gray!30](-0.9,2) rectangle (-1.1,3.3);
\draw[fill=gray!30](0.9,2) rectangle (1.1,3.5);

\draw[thick,draw=gray!30](-0.095,2)--(0.095,2);
\draw[thick,draw=gray!30](-0.905,2)--(-1.095,2);
\draw[thick,draw=gray!30](0.905,2)--(1.095,2);

\node[mark size=1pt,color=black] at (-1,2) {\pgfuseplotmark{*}};
\node[color=black] at (-1,1.8) {\small $M_1$};
\node[mark size=1pt,color=black] at (0,2) {\pgfuseplotmark{*}};
\node[color=black] at (0,1.8) {\small $M_2$};
\node[mark size=1pt,color=black] at (1,2) {\pgfuseplotmark{*}};
\node[color=black] at (1,1.8) {\small $M_3$};

\begin{scope}[shift={(2.5,1)}]
\draw[->] (0,0)--(0.5,0);
\draw[->] (0,0)--(0,0.5);
\node at (0.6,0){\small$x$};
\node at (0,0.6){\small$y$};
\end{scope}

\node at (-1.8,0){\small$x=-L/2$};
\node at (1.8,0){\small$x=L/2$};
\node at (-2,2.2){\small$\Gamma^{\eps}:=\partial\Omega^{\eps}$};
\end{tikzpicture}
\caption{Geometry of $\Om^{\eps}$.\label{DomainOriginal2D}} 
\end{figure}

\noindent Define the reference waveguide $\Om^{0}:=\{z=(x,y)\,|\,x\in\R\mbox{ and }y\in(0;1)\}\subset\R^2$. In this simple geometry, the first positive threshold is equal to $\pi^2$ and we will assume that the wavenumber $k$ satisfies 
\begin{equation}\label{AssumptionWaveNber2D}
0 < k < \pi.
\end{equation}
The modes are given by
\begin{equation}\label{defModes2D}
w^{\pm}_n(x,y)=\frac{e^{\pm i \beta_n x}}{ (2|\beta_n|)^{-1/2}}\,\varphi_n(y) \mbox{ with }\ \beta_n:=\sqrt{k^2-(n\pi)^2},\ \varphi_n(y):=\begin{array}{|ll}1 & \mbox{ if }n=0\\
\sqrt{2}\cos(n\pi y)  & \mbox{ if }n\ge 1\\
\end{array}.
\end{equation}
We remind the reader that we denote $w^{\pm}(x,y):=w^{\pm}_0(x,y)=(2k)^{-1/2}e^{\pm i k x}$ (notice that $|\om|^{-1/2}=1$ in the notation of Section \ref{SectionSetting}). Now, we define the perturbed waveguide 
\begin{equation}\label{defPerturbedDomain}
\Om^{\eps}=\Om^0\cup\mathscr{S}^{\eps}_1\cup \mathscr{S}^{\eps}_2\cup \mathscr{S}^{\eps}_3
\end{equation}
with, for $m=1,\dots,3$,
\begin{equation}\label{defThinStrips}
\mathscr{S}^{\eps}_m = (-\eps/2+x_m;x_m+\eps/2)\times[1;1+h_m).
\end{equation}
Here, the $x_m\in\R$ and $h_m>0$ are some numbers to determine (see Figure \ref{DomainOriginal2D} for an illustration). We assume that $x_1<x_2<x_3$ and we set $M_m=(x_m,1)$. In the following, we want to cancel the two complex coefficients $s^{\eps\pm}$. These coefficients are related by one law, namely the conservation of energy. Therefore, practically we have to cancel three real numbers. This is the reason why we add exactly three thin rectangles to $\Om^0$ (for more details, see the discussion after (\ref{FormulaCoeffs})). In the following,  we consider the scattering problem 
\begin{equation}\label{PbChampTotal2D}
\begin{array}{|rcll}
\multicolumn{4}{|l}{\mbox{Find }u^{\eps}\in\mrm{H}^1_{\loc}(\Om^{\eps}) \mbox{ such that }u^{\eps}-w^+\mbox{ is outgoing and } }\\[3pt]
-\Delta u^{\eps}  & = & k^2 u^{\eps} & \mbox{ in }\Om^{\eps}\\[3pt]
\partial_{\nu} u^{\eps}  & = & 0  & \mbox{ on }\Gamma^{\eps}:=\partial\Om^{\eps}.\end{array}
\end{equation}
For all $k\in(0;\pi)$, when 
\begin{equation}\label{NonResoning}
h_m\notin\{(2p+1)\pi/(2k)\,|\,p\in\N\}
\end{equation}
so that one avoids the resonances of the thin rectangles for problem (\ref{Pb1D}), working as in any of the papers  \cite{Beal73,Gady93,KoMM94,Naza96,Gady05,Naza05,BaNa15} (see \cite[Lemma 4.4]{JoTo06} for this particular result), we can prove that (\ref{PbChampTotal2D}) admits a unique solution $u^{\eps}$ for $\eps$ small enough. Moreover, we have the decomposition 
\begin{equation}\label{DecompoChampTotalBis}
u^{\eps}-\chi^-w^+ = \chi^+ T^{\eps} w^+ + \chi^- R^{\eps} w^- + \tilde{u}^{\eps},
\end{equation}
where $\tilde{u}^{\eps}\in\mH^1(\Om^{\eps})$ is a function which is exponentially decaying at $\pm\infty$. In the following, we use the notation  $u_i:=w^+$, $u^{\eps}_s:=u^{\eps}-w^+$. We also introduce the coefficients $s^{\eps\pm}$ such that
\begin{equation}\label{defCoeffspm}
s^{\eps-} = R^{\eps}\qquad \mbox{ and }\qquad s^{\eps+} = T^{\eps}-1.
\end{equation}
With this definition, according to (\ref{DecompoChampTotalBis}), we have $u^{\eps}_s= \chi^+ s^{\eps+} w^+ + \chi^- s^{\eps-} w^- + \hat{u}^{\eps}$ for some $\hat{u}^{\eps}\in\mH^1(\Om^{\eps})$ which is exponentially decaying at $\pm\infty$. In this particular domain, we can compute an asymptotic expansion of the coefficients $s^{\eps\pm}$ with respect to the small parameter $\eps$. The technique to derive such an expansion is quite classical but still requires to introduce a methodical exposition. The reader who wishes to skip the details may proceed directly to formula (\ref{expressionReflecCoeffterThree}). 

\subsection{Asymptotic expansion of $u^{\eps}$}\label{describExpansion}
We first construct an asymptotic expansion of $u^{\eps}$. The literature concerning asymptotic expansions for such problems is well-documented. For this reason, we give a rather compact presentation of the method. For more details, we refer the reader to the papers \cite{Beal73,Arse76,Gady93,KoMM94,Naza96,Gady05,Naza05,BaNa15}, especially to \cite{JoTo06,CDJT06,JoTo06b,JoTo08} where a problem very close to our concern, motivated by numerics, is considered. We search for an expansion of $u^{\eps}$ under the form
\begin{equation}\label{DefAnsatz}
u^{\eps}(z)= \begin{array}{|l}
u^{0}(z)+\eps\,u^{1}(z)+\dsp\sum_{m=1}^3\zeta_m(z)(V^0_m(\xi^m)+\eps\,(\ln\eps\,A_m+V^{1}_m(\xi^m)))+\dots,\  z\in\Om^0\\[12pt]
v^{0}_m(y)+\eps\,v^{1}_m(y)+\zeta_m(z)(V^0_m(\xi^m)+\eps\,(\ln\eps\,A_m+V^{1}_m(\xi^m)))+\dots,\  z\in\mathscr{S}^{\eps}_m,\ m=1,2,3,\hspace{-0.4cm}~
\end{array}
\end{equation}
where $z=(x,y)$ and where the dots correspond to small remainders. Note that the term $\ln\eps$ has been introduced in accordance with the above references. In (\ref{DefAnsatz}), for $m=1,2,3$, $\xi^m$ denotes the fast variable defined by $\xi^m=\eps^{-1}(z-M_m)$. The cut-off function $\zeta_m\in\mathscr{C}^{\infty}(\R^2,[0;1])$ is equal to one in a neighbourhood of $M_m$ and such that $\zeta_m(z)=0$ for $z$ satisfying $|z-M_m|\ge \min(1,|x_1-x_2|/2,|x_2-x_3|/2,h_m)$ or $|z|\ge L/2$. Now, we explain how to determine the functions $u^{0}$, $u^{1}$, $V^0_m$, $V^1_m$, $v^{0}_m$, $v^{1}_m$ as well as the constants $A_m$ appearing in (\ref{DefAnsatz}). We begin with a formal approach. The justification of the obtained results will be the purpose of Section \ref{SectionJustifAsym}. \\
\newline 
In first approximation, the thin rectangles are invisible for the incident field $w^+$. This yields $u^{\eps}=w^++\dots$ in $\Om^0$. Therefore, we impose $u^{0}=w^+=(2k)^{-1/2}e^{i k x}$. When $\eps$ goes to zero, $\mathscr{S}^{\eps}_m$ becomes the segment $\{x_m\}\times[1;1+h_m)$ and $u^{\eps}$ satisfies a one-dimensional Helmholtz equation. Matching the value of the fields at $M_m$ leads us to take $v^{0}_m$ such that
\begin{equation}\label{Pb1D}
\dsp\frac{d^2v^{0}_m}{dy^2}+k^2v^{0}_m=0\ \,\mbox{in }(1;1+h_m),\qquad \dsp\frac{dv^{0}_m}{dy}(1+h_m)=0\qquad\mbox{and}\qquad v^{0}_m(1)=w^{+}(M_m).
\end{equation}
This problem admits a unique solution $v^{0}_m(y)=w^{+}(M_m)(\cos(k(y-1))+\tan(kh_m)\sin(k(y-1)))$ if and only if $h_m$ satisfies condition (\ref{NonResoning}). In the following, we shall impose $h_m$ to meet this condition. Set 
\[
\Upsilon^{\eps}_m:=(-\eps/2+x_m;x_m+\eps/2)\times\{1\}
\]
A Taylor expansion of $u^{0}=w^+\in\mathscr{C}^{\infty}(\R^2)$ gives $u^{0}(x,1)=w^{+}(M_m)+(x-x_m)\partial_x w^{+}(M_m)+O(|x-x_m|^2)$. On $\Upsilon^{\eps}_m$, we have $|x-x_m|\le\eps$, which implies $|u^{0}-v^0_m|\le C\eps$. Thus, the jump of traces across $\Upsilon^{\eps}_m$ of the function equal to $u^0$ in $\Om^0$ and to $v^0_m$ in $\mathscr{S}^{\eps}_m$, is of order $\eps$. Therefore, we set $V^{0}_m=0$.  

\begin{figure}[!ht]
\centering
\begin{tikzpicture}[scale=1.5]
\begin{scope}
\clip (-2,2) -- (2,2) -- (2,0) -- (-2,0) -- cycle;
\draw[fill=gray!30,draw=none](0,2) circle (2);
\end{scope}
\draw[fill=gray!30,draw=none](-0.5,2) rectangle (0.5,3.5);
\draw (-2,2)--(-0.5,2)--(-0.5,3.5);
\draw (2,2)--(0.5,2)--(0.5,3.5);
\draw[dashed] (-0.5,3.5)--(-0.5,3.8);
\draw[dashed] (0.5,3.5)--(0.5,3.8);
\draw[dashed] (2.5,2)--(2,2);
\draw[dashed] (-2.5,2)--(-2,2);
\draw[thick,draw=gray!30](-0.495,2)--(0.495,2);
\node[mark size=1pt,color=black] at (0,2) {\pgfuseplotmark{*}};
\node[color=black] at (0,1.8) {\small $O$};
\begin{scope}[shift={(2.2,1)}]
\draw[->] (0,0)--(0.5,0);
\draw[->] (0,0)--(0,0.5);
\node at (0.66,0){\small$\xi_x$};
\node at (0,0.66){\small$\xi_y$};
\end{scope}
\node at (0,1){\small$\R^2_-$};
\node at (0,2.75){\small$\Pi$};
\node at (-2.5,2.9){\small$\Xi=\R^2_-\cup\Pi\cup(-1/2;1/2)\times\{0\}$};
\node at (2.5,2.9){\small\phantom{$\Xi=\R^2_-\cup\Pi\cup(-1/2;1/2)\times\{0\}$}};
\draw[dotted] (-0.5,2)--(0.5,2);
\end{tikzpicture}
\caption{Geometry of the frozen domain $\Xi$.\label{FrozenGeom}} 
\end{figure}
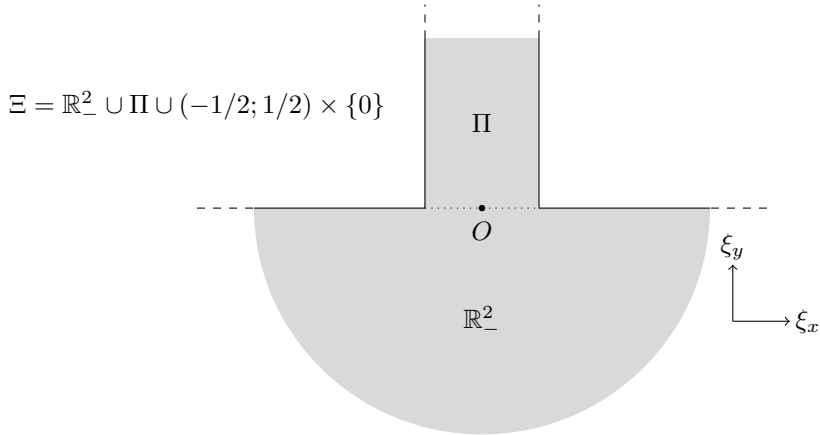

\noindent To compensate the jumps of traces and normal traces of the approximation of $u^{\eps}$ across $\Upsilon^{\eps}_m$ at order $\eps$, we will work with the function $V^{1}_m$. Let us make the change of variables $\xi_m=\eps^{-1}(z-M_m)$. When $\eps$ goes to zero, a neighbourhood of $M_m$ (where the cut-off function $\zeta_m$ is equal to one) is transformed into the domain $\Xi:=\R^2_-\cup\Pi\cup(-1/2;1/2)\times\{0\}$ which is the union of the half plane $\R^2_-:=\R\times(-\infty;0)$ and the half-strip $\Pi:=(-1/2;1/2)\times(0;+\infty)$ (see an illustration with Figure \ref{FrozenGeom}). Denote $\mrm{Id}$ the identity operator. In a neighbourhood of $M_m$, we have 
\[
\begin{array}{lcl}
(\Delta_z+k^2\mrm{Id})\,u^{\eps}(z) & =& (\Delta_z+k^2\mrm{Id})
(\eps\ln\eps\,A_m+\eps\,V^{1}_m(\eps^{-1}(z-M_m))+\dots)\\[3pt]
  &=& \eps^{-1}\,\Delta_{\xi_m}V^1_{m}(\xi_m)+\eps\ln\eps\,k^2A_m+\dots\ .
\end{array}
\]
Since there is no term of order $\eps^{-1}$ in the expansion (\ref{DefAnsatz}), we impose $\Delta_{\xi}V^1_{m}(\xi)=0$ for $\xi:=(\xi_x,\xi_y)\in\R^2_-\cup\Pi$. On $\Upsilon^{\eps}_m$, we have $u^0-v^0_m=(x-x_m)\partial_x w^{+}(M_m)+O(\eps^2)$ and $\partial_y(u^0-v^0_m)=-\partial_yv^0_m=- k\tan(kh_m)w^{+}(M_m)$. Remarking that $\partial_y=\eps^{-1}\partial_{\xi_y}$, we deduce that $V^1_{m}$ must satisfy the equations 
\begin{equation}\label{PbJump}
\begin{array}{|ll}
-\Delta V^{1}_m=0\quad\mbox{ in }\R^2_-\cup\Pi,\qquad\partial_{\nu}V^{1}_m=0\quad\mbox{ on  }\partial\Xi,\\[9pt]
V^{1}_m(\xi_x,0^-)-V^{1}_m(\xi_x,0^+)=-\xi_x\partial_x w^+(M_m),\quad\mbox{ for }\xi_x\in(-1/2;1/2)\\[8pt]
\partial_{\xi_y}V^{1}_m(\xi_x,0^-)-\partial_{\xi_y}V^{1}_m(\xi_x,0^+)=k\tan(kh_m)w^{+}(M_m),\quad\mbox{ for }\xi_x\in(-1/2;1/2).
\end{array}
\end{equation}
In Lemma \ref{lemmaTechnique} below, we prove that this problem admits a unique solution $V^{1}_m\in\mH^1_{\loc}(\R^2_-\cup\Pi)$ (the $\mH^1$-norm of $V^{1}_m$ is finite on each bounded subset of $\R^2_-\cup\Pi$) with the  behaviour
\begin{equation}\label{DecompoStripHPlane}
V^{1}_m(\xi) = \left\{\begin{array}{l}a_m\ln(|\xi|)+O(|\xi|^{-1}),\quad\xi\in\R^2_-,|\xi|\to+\infty,\\[4pt]
b_m+O(e^{-\pi\xi_y}),\quad\xi\in\Pi,\xi_y\to+\infty,
\end{array}\right.
\end{equation}
where $a_m$ and $b_m$ are some constants. In the following, the value of $a_m$ will be of particular interest. Integrating the equation $\Delta V^{1}_m(\xi)=0$  in the domain $\Xi_\ell=\{\xi\in\R^2_-\cup\Pi\,|\,|\xi| < \ell\}$ and taking the limit $\ell\to+\infty$, we obtain
\begin{equation}\label{defAm}
a_m=-\pi^{-1}k\tan(kh_m)w^{+}(M_m).
\end{equation}
Imposing $A_m=a_m$ in the expansion (\ref{DefAnsatz}) allows one to obtain an error on the source term in $\Om^0$ which is equal to $(\Delta_z+k^2\mrm{Id})(\sum_{m=1}^3\zeta_m(z)(\eps\ln\eps\,A_m+\eps\,V^{1}_m(\eps^{-1}(z-M_m))))=f^1(z)\,\eps+O(\eps^2)$ with 
\begin{equation}\label{PbChampTotalTerm1Bis}
f^1(z)=\dsp\sum_{m=1}^{3}a_m(\Delta_z+k^2\mrm{Id})\left(\zeta_m\ln r_m\right). 
\end{equation}
In (\ref{PbChampTotalTerm1Bis}), $r_m:=|z-M_m|$. Since $\zeta_m$ is equal to one in a neighbourhood of $M_m$ and compactly supported, the right hand side of (\ref{PbChampTotalTerm1Bis}) is an element of $\mrm{L}^2(\Om^0)$. On the other hand, choosing a cut-off function $\zeta_m$ which depends only on the variable $r_m=|z-M_m|$ in $\R^2_-$ implies $\partial_{\nu}(\zeta_m(z)V^1_m(\eps^{-1}(z-M_m)))=0$ on $\Gamma^{0}\setminus\overline{\Upsilon_m^{\eps}}$. Finally, to compensate the discrepancy (the error made on the source terms) at order $\eps$, we define $u^{1}$ as the solution to the problem
\begin{equation}\label{PbChampTotalTerm1}
\begin{array}{|rcll}
\multicolumn{4}{|l}{\mbox{Find }u^{1}\in\mrm{H}^1_{\loc}(\Om^{0}) \mbox{ such that }u^{1}\mbox{ is outgoing and } }\\[3pt]
-\Delta u^{1}-k^2 u^{1}  & = & f^1 & \mbox{ in }\Om^{0}\\[4pt]
\partial_{\nu} u^1  & = & 0  & \mbox{ on }\Gamma^{0}.
\end{array}
\end{equation}
In $\mathscr{S}^{\eps}_m$, without loss of generality, we can impose that the cut-off function $\zeta_m$ depends only on the variable $y$. Then we find an error on the source term equal to $(\Delta_z+k^2\mrm{Id})(\zeta_m(z)(\eps\ln\eps\,a_m+\eps\,V^{1}_m(\eps^{-1}(z-M_m))))=f^1_m(y)\,\eps\ln\eps+O(\eps)$ with 
\begin{equation}\label{PbChampTotalTerm1Ter}
f^1_m(y)=\dsp(\Delta_y+k^2\mrm{Id})(\zeta_m a_m). 
\end{equation}
A direct computation yields $\|f^1_m\,\eps\ln\eps\|_{\mrm{L}^2(\mathscr{S}^{\eps}_m)} \le C\,\eps^{3/2}(1+|\ln\eps|)$ where $C>0$ is independent of $\eps$. This error will be sufficiently small for our purpose and therefore we take $v^1_m\equiv0$ in (\ref{DefAnsatz}).\\
\newline
In the next section, from the asymptotic expansion of $u^{\eps}$, solution to (\ref{PbChampTotal2D}), we deduce an asymptotic expansion of the coefficients $s^{\eps\pm}$ appearing in the decomposition of $u^{\eps}$ (see (\ref{DecompoChampTotalBis})--(\ref{defCoeffspm})). Before proceeding, we explain how to prove an intermediate result which was used in the definition of the terms of (\ref{DecompoChampTotalBis})--(\ref{defCoeffspm}).

\begin{lemma}\label{lemmaTechnique}
Problem (\ref{PbJump}) has a unique solution $V^{1}_m\in\mH^1_{\loc}(\R^2_-\cup\Pi)$ admitting expansion (\ref{DecompoStripHPlane}).
\end{lemma}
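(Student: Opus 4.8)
The plan is to treat (\ref{PbJump}) as a transmission problem on the domain $\Xi$, which joins the angular outlet to infinity $\R^2_-$ (a sector of opening $\pi$) to the cylindrical outlet $\Pi$, and to split off explicitly the non-decaying behaviour predicted by (\ref{DecompoStripHPlane}) before solving a finite-energy problem for the remainder. First I would lift the inhomogeneous transmission data: since both the trace jump $-\xi_x\partial_x w^+(M_m)$ and the normal-trace jump $k\tan(kh_m)w^+(M_m)$ are supported on the segment $(-1/2;1/2)\times\{0\}$, one constructs a smooth function $G$, compactly supported near this segment and vanishing in $\Pi$, that realises exactly these two jumps. To absorb the logarithm I then set $V^{1}_m=G+a_m\,\psi\ln r+U$, where $r=|\xi|$, $a_m$ is the constant (\ref{defAm}), and $\psi$ is a radial cut-off equal to $1$ for $r\ge2$ and $0$ for $r\le1$, extended by $0$ to $\Pi$. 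Because the slit lies in $\{r<1\}$ and $\partial_{\xi_y}\ln r=0$ on $\{\xi_y=0\}$, this log-lifting introduces no transmission or boundary data and is harmonic away from the annulus $\{1\le r\le2\}$, so after the substitution $U$ solves a problem with homogeneous transmission and Neumann conditions and a right-hand side $F_0\in\mrm{L}^2(\Xi)$ of compact support.

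The decisive point is the solvability condition. Integrating the equation for $U$ over a truncated domain and passing to the limit shows that a finite-Dirichlet-energy solution exists only if $\int_\Xi F_0\,d\xi=0$; this is precisely the flux balance between the injection produced by the normal-trace jump and the flux $\pi a_m$ carried off by the logarithmic term, i.e. the identity (\ref{defAm}) that fixes $a_m$. Granting this compatibility, I would solve for $U$ variationally in the Beppo--Levi space $\mW:=\{V\in\mH^1_{\loc}(\Xi)\,:\,\nabla V\in\mrm{L}^2(\Xi)\}$. Since $|\nabla\ln r|^2=r^{-2}$ has divergent integral over $\R^2_-$, we have $\ln r\notin\mW$, so $\mW$ automatically excludes logarithmic growth, which is exactly the normalisation demanded by (\ref{DecompoStripHPlane}). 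On the quotient $\mW/\Cplx$ the Dirichlet form $\int_\Xi\nabla U\cdot\nabla\overline{V}\,d\xi$ is coercive (via a weighted Poincar\'e/Hardy inequality adapted to the two outlets), and the condition $\int_\Xi F_0\,d\xi=0$ makes the right-hand side a bounded functional annihilating constants; the Lax--Milgram lemma then yields $U$, unique once the additive constant is fixed by requiring $U\to0$ at infinity in $\R^2_-$.

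It remains to recover the two asymptotic forms in (\ref{DecompoStripHPlane}). Away from the support of $F_0$ the function $U$ is harmonic, and I would invoke the standard theory of elliptic problems in domains with angular and cylindrical outlets to infinity (as in \cite{Naza96,BaNa15}). In the sector $\R^2_-$ of opening $\pi$ the Neumann-admissible homogeneous harmonics are $r^{-n}\cos(n\theta)$; the constant and the logarithm having been removed, the leading surviving term is of order $r^{-1}$, so $U=O(|\xi|^{-1})$. In the half-strip $\Pi$ the separation of variables with Neumann conditions on $\xi_x=\pm1/2$ gives the transverse modes $\cos(n\pi(\xi_x+1/2))e^{-n\pi\xi_y}$, whence $U=b_m+O(e^{-\pi\xi_y})$, with $b_m$ the mean mode. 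Re-inserting $G$ and $a_m\psi\ln r$ then reproduces exactly (\ref{DecompoStripHPlane}).

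Finally, for uniqueness I would argue directly within the class of solutions admitting expansion (\ref{DecompoStripHPlane}). For the homogeneous problem the flux balance forces $a_m=0$, so the half-plane part is $O(|\xi|^{-1})$ and the strip part is $b+O(e^{-\pi\xi_y})$; both gradients are then square-integrable, and the energy identity obtained by multiplying $-\Delta V=0$ by $\overline{V}$ and integrating over $\Xi_\ell$ has vanishing boundary contributions as $\ell\to+\infty$ (the arc term decays like $\ln\ell\,/\,\ell$ and the strip term like $e^{-\pi\xi_y}$). Hence $\nabla V\equiv0$, and since the half-plane part carries no constant it vanishes identically, so $V\equiv0$. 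I expect the main obstacle to be the functional-analytic bookkeeping at infinity: setting up the weighted framework so that the logarithmic mode at the angular outlet and the constant mode at the cylindrical outlet are treated simultaneously, proving the coercivity/Poincar\'e estimate on $\mW$, and justifying the precise remainder orders in (\ref{DecompoStripHPlane}). The corner points $(\pm1/2,0)$, where $\Xi$ has interior angle $3\pi/2$, produce a weak $r^{2/3}$ singularity that is harmless for the $\mH^1$ theory but must be acknowledged in the regularity statement.
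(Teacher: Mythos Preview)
Your approach is essentially the paper's: lift the transmission data, peel off the logarithmic growth $a_m\ln|\xi|$ with a cut-off (the coefficient being fixed by the flux balance (\ref{defAm})), and solve the remainder variationally in a finite-Dirichlet-energy space via a Hardy/Poincar\'e inequality together with the compatibility condition that the right-hand side annihilates constants (your $\int_\Xi F_0=0$ is the paper's $\ell(1)=0$). The only cosmetic differences are that the paper lifts the trace jump by solving a Dirichlet problem in the strip $\Pi$ (using that $\xi_x\mapsto-\xi_x\partial_xw^+(M_m)$ has zero mean, so the lift decays exponentially) rather than by a compactly supported $G$ on the half-plane side, and that it writes the variational space with explicit logarithmic weights rather than in Beppo--Levi form; both choices are equivalent to yours.
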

\begin{proof}
We recall that $\Xi=\R^2_-\cup\Pi\cup(-1/2;1/2)\times\{0\}$. First, denote $\mathcal{N}\in\mH^1(\R^2_-\cup\Pi)$ the function such that $\mathcal{N}=0$ in $\R^2_-$ and 
\[
\Delta \mathcal{N}=0\ \mbox{ in }\ \Pi;\qquad \partial_{\xi_x}\mathcal{N}=0\ \mbox{ on }\ \partial\Pi\cap\partial\Xi;\qquad \mathcal{N}(\xi_x,0^+)=-\xi_x\partial_x w^+(M_m)\ \mbox{ for }\ \xi_x\in(-1/2;1/2).
\]
Since the integral of $\xi_x\partial_x w^+(M_m)$ on $(-1/2;1/2)$ is null, using decomposition in Fourier series, we can show that $\mathcal{N}$ is uniquely defined and exponentially decaying in $\Pi$ as $\xi_y\to+\infty$. Second, introduce $\phi\in\mathscr{C}^{\infty}(\Xi,[0;1])$, a cut-off function  such that $\phi=0$ in $\Pi\cup\mrm{D}_{1}(O)$ and $\phi=1$ in $\R^2_-\setminus\mrm{D}_{2}(O)$ ($\mrm{D}_{r}(O)$ denotes the open disk of $\R^2$ centered at $O$ of radius $r>0$). Additionally, we assume that $\phi$ depends only on the radial variable $|\xi|$. One sees that $V^{1}_m\in\mH^1_{\loc}(\R^2_-\cup\Pi)$ is a solution of (\ref{PbJump}) having behaviour (\ref{DecompoStripHPlane}) if and only if $W=V^{1}_m-\mathcal{N}+\pi^{-1}k\tan(kh_m)w^{+}(M_m)\,\phi\ln|\xi|\in\mH^1_{\loc}(\Xi)$ satisfies 
\begin{equation}\label{PbJumpBis}
\begin{array}{|ll}
-\Delta W=f\quad\mbox{ in }\R^2_-\cup\Pi,\qquad\partial_{\nu}W=0\quad\mbox{ on  }\partial\Xi,\\[9pt]
W(\xi_x,0^-)-W(\xi_x,0^+)=0,\quad\mbox{ for }\xi_x\in(-1/2;1/2)\\[8pt]
\partial_{\xi_y}W(\xi_x,0^-)-\partial_{\xi_y}W(\xi_x,0^+)=g,\quad\mbox{ for }\xi_x\in(-1/2;1/2).
\end{array}
\end{equation}
and admits the expansion
\begin{equation}\label{DecompoStripHPlaneBis}
W(\xi) = \left\{\begin{array}{l}A\ln(|\xi|)+O(|\xi|^{-1}),\quad\xi\in\R^2_-,|\xi|\to+\infty,\\[4pt]
B+O(e^{-\pi\xi_y}),\quad\xi\in\Pi,\xi_y\to+\infty,
\end{array}\right.
\end{equation}
where $A$, $B$ are some constants. In (\ref{PbJumpBis}), we set $f(\xi)=-\pi^{-1}k\tan(kh_m)w^{+}(M_m)\,\Delta(\phi\ln|\xi|)$ and $g(\xi)=k\tan(kh_m)w^{+}(M_m)+\partial_{\xi_y}\mathcal{N}(\xi_x,0^+)$. Now, one can check that $W\in\mH^1_{\loc}(\Xi)$ solves (\ref{PbJumpBis}) if and only if it satisfies the variational identity
\begin{equation}\label{DecompoStripHPlaneTer}
a(W,W')=\ell(W')\qquad\forall W'\in\mathscr{C}^{\infty}_0(\overline{\Xi}):=\{ w|_{\Xi}\,|\,w\in\mathscr{C}^{\infty}_0(\R^2)\}
\end{equation}
with $a(W,W')=\int_{\Xi}\nabla W\cdot\nabla W'\,d\xi$ and $\ell(W')=\int_{\Xi}f\,W'\,d\xi+\int_{(-1/2;1/2)\times\{0\}}g\,W'\,d\xi_x$. Define the space $\mZ$ as the completion of $\mathscr{C}^{\infty}_0(\overline{\Xi})$ in the weighted norm 
\[
\|w\|_{\mZ} :=\Big(\int_{\Xi}|\nabla w|^2+\cfrac{\phi|w|^2}{1+|\xi|^2\,(\ln|\xi|)^2}+\cfrac{(1-\phi)|w|^2}{1+|\xi_y|^2}\,d\xi\ \Big)^{1/2}.
\]
Note that $\mZ$ contains functions which are constant at infinity in $\R^2_-$ and in $\Pi$ but does not allow linear growth in $\Pi$ nor $\ln|\xi|$ behaviour in $\R^2_-$. Define the subspace $\mZ_{\#}:=\{w\in\mZ\,|\,\int_{K}w\,d\xi=0\}$ where $K\subset\overline{\Xi}$ is a compact set such that $\mrm{meas}_2(K)>0$. Using Hardy type inequalities, one can prove that the norms $\|\cdot\|_{\mZ}$ and $\|\nabla\cdot\|_{\mrm{L}^2(\Xi)}$ are equivalent in $\mZ_{\#}$. This allows us to show that there is a unique $\tilde{W}\in\mZ_{\#}$ such that $a(\tilde{W},W')=\ell(W')$ for all $W'\in\mZ_{\#}$. Observing that $\ell(1)=0$ (this is the reason why we introduced the shift $\pi^{-1}k\tan(kh_m)w^{+}(M_m)\,\phi\ln|\xi|$ in the definition of $W$), we conclude that there is a unique $W$ satisfying (\ref{DecompoStripHPlaneTer}) and admitting expansion (\ref{DecompoStripHPlaneBis}). This ends the proof. 
\end{proof}

\subsection{Asymptotic expansion of the reflection/transmission coefficients}

For the coefficients $s^{\eps\pm}$, working as in (\ref{limit1})-(\ref{limit2}), we find the formulas
\begin{equation}\label{expressionReflecCoeff}
 i\,s^{\eps\pm} = \dsp\int_{\Sigma_{-L}\cup\Sigma_{+L}} \frac{\partial u^{\eps}}{\partial\nu}\,\overline{w^{\pm}}-u^{\eps}\frac{\partial\overline{w^{\pm}}}{\partial\nu}\,d\sigma,
\end{equation}
where $\Sigma_{\pm L}=\{\pm L\}\times(0;1)$ and $\partial_{\nu}=\pm\partial_x$ at $x=\pm L$. Note that the right hand side of (\ref{expressionReflecCoeff}) remains unchanged with $L$ replaced by any $\ell>L/2$. Plugging the asymptotic expansion (\ref{DefAnsatz}) of $u^{\eps}$ in (\ref{expressionReflecCoeff}) and observing that the correction terms involving the cut-off functions $\zeta_m$ vanish for $|z|\ge L/2$, leads us to make the ansatz
\begin{equation}\label{expansionReflecCoeff}
s^{\eps\pm} = s^{0\pm} + \eps\,s^{1\pm}+\dots\ .
\end{equation}
And more precisely, identifying the powers in $\eps$ yields, for $j=0,1$, 
\[
 i\,s^{j\pm} = \dsp\int_{\Sigma_{-L}\cup\Sigma_{+L}} \frac{\partial u^{j}}{\partial\nu}\,\overline{w^{\pm}}- u^{j}\frac{\partial\overline{w^{\pm}}}{\partial\nu}\,d\sigma.
\]
Since $u^0=w^+$, first we deduce $s^{0\pm}=0$. In other words, we claim that $R^{\eps}=O(\eps)$ and $T^{\eps}-1=O(\eps)$. This seems reasonable since we make a perturbation of order $\eps$ in the reference waveguide where $R=0$ and $T=1$. On the other hand, integrating by parts in $\Om^0_L:=(-L;L)\times(0;1)$ and using (\ref{PbChampTotalTerm1Bis}), (\ref{PbChampTotalTerm1}), we obtain 
\begin{equation}\label{PredefQuantityI}
 i\,s^{1\pm} = -\dsp\int_{\Om^0_L}  f^1\,\overline{w^{\pm}}\,dz =
 -\dsp\sum_{m=1}^{3}a_m\,\mrm{I}_m
\end{equation}
with, for $m=1,2,3$,
\begin{equation}\label{defQuantityI}
\mrm{I}_m:=\dsp\int_{\Om^0_L}\overline{w^{\pm}}\,(\Delta_z+k^2\mrm{Id})(\zeta_m\ln r_m)\,dz= \dsp  \int_{\Om^0_L} \overline{w^{\pm}}\,\Delta(\zeta_m\ln r_m)-\zeta_m\ln r_m\,\Delta_z\overline{w^{\pm}}\,dz.
\end{equation}
Denote $\mrm{D}_{\delta}(M_m)$ the open disk of $\R^2$ centered at $M_m$ of radius $\delta>0$. Using the Lebesgue's dominated convergence theorem, we can write 
\begin{equation}\label{IntegrationByPartAutre}
\mrm{I}_m = \dsp\lim_{\delta\to 0}\ \dsp  \int_{\Om^0_L\setminus\overline{\mrm{D}_{\delta}(M_m)}} \overline{w^{\pm}}\,\Delta(\zeta_m\ln r_m)-\zeta_m\ln r_m\,\Delta\overline{w^{\pm}}\,dz.
\end{equation}
Integrating by parts in (\ref{IntegrationByPartAutre}) and using that $\zeta_m$ is equal to one in a neighbourhood of $M_m$ as well as the fact that $\zeta_m$ vanishes on $\Sigma_{\pm L}$, we get 
\begin{equation}\label{explicitCalculus}
\mrm{I}_m   =  \dsp\lim_{\delta\to 0} \  \dsp\int_{\partial\mrm{D}_{\delta}(M_m)\cap\Om^0_{L}} -\overline{w^{\pm}}\,\partial_{r_m}(\ln r_m)+\ln r_m\,\partial_{r_m}\overline{w^{\pm}}\,d\sigma.
\end{equation}
Then, a direct computation gives $\mrm{I}_m=-\pi\,\overline{w^{\pm}}(M_m)$. Plugging this result in (\ref{PredefQuantityI}), (\ref{defQuantityI}), we find 
\[
 i\,s^{1\pm} =  -k\sum_{m=1}^{3}\overline{w^{\pm}}(M_m)w^+(M_m)\tan(kh_m).
\]
Summing up, when $\eps$ goes to zero, the coefficients $s^{\eps\pm}$ appearing in the decomposition of $u^{\eps}$ (see (\ref{DecompoChampTotalBis})--(\ref{defCoeffspm})) admit the asymptotic expansion 
\begin{equation}\label{expressionReflecCoeffterThree}
i\,s^{\eps\pm} = -\eps k\sum_{m=1}^{3}\overline{w^{\pm}}(M_m)w^+(M_m)\tan(kh_m)+\dots\ .
\end{equation}

\subsection{The fixed point procedure}

Observing (\ref{expressionReflecCoeffterThree}), we see it is easy to find $h_1,h_2,h_3$ such that the coefficients $s^{\eps\pm}$ vanish at order $\eps$. For example, one can take $h_1=h_2=h_3=\pi/k$ (note that $h_m=p_m\pi/k$ for $p_m\in\N$ with some $p_m\ne0$ is also a valid choice). However, this is not sufficient since we want to impose $s^{\eps\pm}=0$ (at any order in $\eps$). To control the higher order terms in $\eps$ whose dependence with respect to $h_1,h_2,h_3$ is less simple than for the first term of the asymptotics, we will use the fixed point theorem. To obtain a fixed point formulation, for $m=1,2,3$, we look for $h_m$ under the form 
\begin{equation}\label{ExpressEll}
h_m = \frac{\pi}{k} + \tau_m.
\end{equation}
In these expressions, $\tau_{1},\tau_{2},\tau_{3}$ are real parameters that we will tune to impose $s^{\eps\pm}=0$. We define the vector $\tau=(\tau_1,\tau_2,\tau_3)^{\top}\in\R^3$ and we denote $\Om^{\eps}(\tau)$, $u^{\eps}(\tau)$, $s^{\eps\pm}(\tau)$ instead of $\Om^{\eps}$, $u^{\eps}$, $s^{\eps\pm}$. With this particular choice for $h_1,h_2,h_3$, plugging (\ref{ExpressEll}) in (\ref{expressionReflecCoeffterThree}), we obtain 
\begin{equation}\label{FormulaCoeffs}
is^{\eps\pm}(\tau) =  -\eps k\sum_{m=1}^{3}\overline{w^{\pm}}(M_m)w^+(M_m)\tan(k\tau_m)+\tilde{s}^{\eps\pm}(\tau),
\end{equation}
where $\tilde{s}^{\eps\pm}(\tau)$ is a remainder. We want to impose $s^{\eps\pm}(\tau) =0$. According to the energy conservation, we have $|s^{\eps-}(\tau)|^2+|1+s^{\eps+}(\tau)|^2=1\Leftrightarrow |s^{\eps-}(\tau)|^2+|s^{\eps+}(\tau)|^2=-2\Re e\,s^{\eps+}(\tau)$. Therefore, if we impose $\Re e\,s^{\eps-}(\tau)=\Im m\,s^{\eps-}(\tau)=\Im m\,s^{\eps+}(\tau)=0$, then there holds $s^{\eps-}(\tau)=0$ and $s^{\eps+}(\tau)\in\{0,-2\}$. In order to discard the case $s^{\eps+}(\tau)=-2$, we will use the fact that the modulus of $s^{\eps+}(\tau)$ is small for $\eps$ small enough\footnote{Of course, it seems much simpler to impose directly $\Re e\,s^{\eps+}(\tau)=0$. Indeed, with the energy conservation, this implies $s^{\eps\pm}(\tau)=0$. However, our approach does not allow us to do it because the first term in the asymptotic expansion of $s^{\eps+}(\tau)$ is purely imaginary (see formula (\ref{FormulaCoeffs})).}. Since $w^{\pm}(M_m)=(2k)^{-1/2} e^{\pm i k x_m}$, we see from (\ref{FormulaCoeffs}) that we have to find $\tau=(\tau_1,\tau_2,\tau_3)^{\top}\in\R^3$ such that 
\begin{equation}\label{pbFixedPoint}
\mathbb{M}\ (\tan(k\tau_1),\tan(k\tau_2),\tan(k\tau_3))^{\top} = 2\eps^{-1}\,(\Re e\,\tilde{s}^{\eps-}(\tau),\Im m\,\tilde{s}^{\eps-}(\tau),\Re e\,\tilde{s}^{\eps+}(\tau))^{\top}
\end{equation}
where we denote 
\begin{equation}\label{defMatrixM}
\mathbb{M}:=\left(\begin{array}{ccc}
\cos(2kx_1) & \cos(2kx_2) & \cos(2kx_3) \\[7pt]
\sin(2kx_1) & \sin(2kx_2) & \sin(2kx_3) \\[7pt]
1 & 1 & 1 
\end{array}\right).
\end{equation}
To proceed, first we set $x_1$, $x_2$, $x_3$, the numbers which determine the positions of the thin rectangles (see (\ref{defThinStrips})), so that the matrix $\mathbb{M}$ is invertible. This can be easily done, taking for example $x_1=-(2p+1)\pi/(4k)$, $x_2=0$ and $x_3=-x_1$ with $p\in\N$. Indeed, in this case we get the invertible matrix 
\begin{equation}\label{ChoiceMatrix}
\mathbb{M}=\left(\begin{array}{ccc}
0 & 1 & 0 \\[7pt]
(-1)^{p+1}& 0 & (-1)^{p} \\[7pt]
1 & 1 & 1 
\end{array}\right).
\end{equation}
For $m=1,2,3$ and $\tau_m\in(-\pi/(2k);\pi/(2k))$, define 
\begin{equation}\label{defttau}
\mathfrak{t}_m=\tan(k\tau_m).
\end{equation}
Denote $\mathfrak{t}=(\mathfrak{t}_1,\mathfrak{t}_2,\mathfrak{t}_3)^{\top}$. From (\ref{pbFixedPoint}), we see that $\mathfrak{t}$ must be a solution to the problem \\
\begin{equation}\label{PbFixedPoint}
\begin{array}{|l}
\mbox{Find }\mathfrak{t}\in\R^{3}\mbox{ such that }\mathfrak{t} = \mathscr{F}^{\eps}(\mathfrak{t}),
\end{array}~\\[5pt]
\end{equation}
with 
\begin{equation}\label{DefMapContra}
\mathscr{F}^{\eps}(\mathfrak{t}):= 2\eps^{-1}\,\mathbb{M}^{-1}(\Re e\,\tilde{s}^{\eps-}(\tau),\Im m\,\tilde{s}^{\eps-}(\tau),\Re e\,\tilde{s}^{\eps+}(\tau))^{\top}. 
\end{equation}
Proposition \ref{propoTech} hereafter ensures that there is some $\gamma_0>0$ such that for all $\gamma\in(0;\gamma_0]$, the map $\mathscr{F}^{\eps}$ is a contraction of $\mrm{B}_\gamma:=\{\mathfrak{t}\in\R^{3}\,\big|\,|\mathfrak{t}| \le \gamma\}$ for $\eps$ small enough. Therefore, the Banach fixed-point theorem guarantees the existence of some $\eps_{\gamma}>0$ such that for all $\eps\in(0;\eps_{\gamma}]$, Problem (\ref{PbFixedPoint}) has a unique solution $\mathfrak{t}^{\mrm{sol}}$ in $\mrm{B}_\gamma$. From this vector $\mathfrak{t}^{\mrm{sol}}=(\mathfrak{t}^{\mrm{sol}}_1,\mathfrak{t}^{\mrm{sol}}_2,\mathfrak{t}^{\mrm{sol}}_3)^{\top}$, define $(\tau^{\mrm{sol}}_1,\tau^{\mrm{sol}}_2,\tau^{\mrm{sol}}_3)\in(-\pi/(2k);\pi/(2k))^3$ such that $\mathfrak{t}^{\mrm{sol}}_m=\tan(k\tau^{\mrm{sol}}_m)$. Since for $\eps$ small enough, we have $|s^{\eps+}(\tau^{\mrm{sol}})|=O(\eps)$, we infer that the coefficients $s^{\eps\pm}(\tau^{\mrm{sol}})$ satisfy $s^{\eps\pm}(\tau^{\mrm{sol}})=0$.\\
\newline
Observe that the height $h^{\mrm{sol}}_m := \pi/k + \tau^{\mrm{sol}}_m$ of the rectangle $\mathscr{S}^{\eps}_m(\tau^{\mrm{sol}})$ is different from zero because $\tau^{\mrm{sol}}\in(-\pi/(2k);\pi/(2k))^3$ (more precisely, in Remark \ref{remarkFirstOrder} below, we show that we have $|\tau^{\mrm{sol}}|=O(\eps^{1/2}(1+|\ln\eps|)$). As a consequence, we have constructed a waveguide, which is not the reference waveguide, where the reflection/transmission coefficients in the decomposition (\ref{DecompoChampTotalBis}) of $u^{\eps}(\tau^{\mrm{sol}})$ satisfy $R^{\eps}(\tau^{\mrm{sol}})=0$ and $T^{\eps}(\tau^{\mrm{sol}})=1$. We summarize this result in the following proposition.  
\begin{proposition}\label{propositionMainResult}
For any $k\in(0;\pi)$, there exists a waveguide $\Om^{\eps}(\tau^{\mrm{sol}})$, different from $\Om^0$, where the scattered field associated with Problem (\ref{PbChampTotal2D}) set in $\Om^{\eps}(\tau^{\mrm{sol}})$ is exponentially decaying at $\pm\infty$. 
\end{proposition}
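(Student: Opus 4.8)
The plan is to assemble the result directly from the asymptotic analysis carried out in the previous subsections, reducing everything to the solvability of the fixed point problem (\ref{PbFixedPoint}). The existence of the waveguide $\Om^{\eps}(\tau^{\mrm{sol}})$ will follow from three ingredients, established earlier: the well-posedness of the scattering problem (\ref{PbChampTotal2D}) for $\eps$ small enough (under the non-resonance condition (\ref{NonResoning})), the asymptotic formula (\ref{expressionReflecCoeffterThree}) for the coefficients $s^{\eps\pm}$, and the contraction property of the map $\mathscr{F}^{\eps}$ stated in Proposition \ref{propoTech}. First I would fix $k\in(0;\pi)$ arbitrarily and choose the abscissas $x_1,x_2,x_3$ as in (\ref{ChoiceMatrix}) so that the matrix $\mathbb{M}$ defined in (\ref{defMatrixM}) is invertible. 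This renders the fixed point map $\mathscr{F}^{\eps}$ in (\ref{DefMapContra}) well-defined.

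Next I would invoke Proposition \ref{propoTech} to obtain $\gamma_0>0$ such that, for any fixed $\gamma\in(0;\gamma_0]$, the map $\mathscr{F}^{\eps}$ is a contraction of the ball $\mrm{B}_\gamma$ once $\eps$ is small enough. The Banach fixed-point theorem then produces a unique solution $\mathfrak{t}^{\mrm{sol}}\in\mrm{B}_\gamma$ of (\ref{PbFixedPoint}) for all $\eps\in(0;\eps_\gamma]$. From $\mathfrak{t}^{\mrm{sol}}$ I would recover the heights via (\ref{defttau}) and (\ref{ExpressEll}), setting $h^{\mrm{sol}}_m=\pi/k+\tau^{\mrm{sol}}_m$ with $\tau^{\mrm{sol}}_m\in(-\pi/(2k);\pi/(2k))$. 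By construction of the fixed point equation (\ref{pbFixedPoint}), the three real quantities $\Re e\,s^{\eps-}(\tau^{\mrm{sol}})$, $\Im m\,s^{\eps-}(\tau^{\mrm{sol}})$ and $\Im m\,s^{\eps+}(\tau^{\mrm{sol}})$ all vanish, so that $s^{\eps-}(\tau^{\mrm{sol}})=0$ and $s^{\eps+}(\tau^{\mrm{sol}})\in\{0,-2\}$.

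The delicate point is to discard the spurious branch $s^{\eps+}(\tau^{\mrm{sol}})=-2$. Here I would use that $|\tau^{\mrm{sol}}|$ is small (indeed $|\tau^{\mrm{sol}}|=O(\eps^{1/2}(1+|\ln\eps|))$ as noted in the statement), so that the leading term of (\ref{FormulaCoeffs}) is $O(\eps)$ and the remainder is even smaller; hence $|s^{\eps+}(\tau^{\mrm{sol}})|=O(\eps)$. For $\eps$ small enough this modulus is strictly below $2$, which forces $s^{\eps+}(\tau^{\mrm{sol}})=0$. Combining, $s^{\eps\pm}(\tau^{\mrm{sol}})=0$, which by (\ref{defCoeffspm}) means $R^{\eps}(\tau^{\mrm{sol}})=0$ and $T^{\eps}(\tau^{\mrm{sol}})=1$. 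Consequently the decomposition (\ref{DecompoChampTotalBis}) reduces to $u^{\eps}(\tau^{\mrm{sol}})-\chi^-w^+=\chi^+w^++\tilde u^{\eps}$ with $\tilde u^{\eps}\in\mH^1(\Om^{\eps})$ exponentially decaying, so the scattered field $u^{\eps}_s=u^{\eps}-w^+$ decays exponentially at $\pm\infty$.

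It remains only to check that $\Om^{\eps}(\tau^{\mrm{sol}})$ genuinely differs from $\Om^0$. This is immediate: since each $h^{\mrm{sol}}_m=\pi/k+\tau^{\mrm{sol}}_m>0$ (because $\tau^{\mrm{sol}}_m\in(-\pi/(2k);\pi/(2k))$ keeps the height bounded away from zero), the thin rectangles $\mathscr{S}^{\eps}_m(\tau^{\mrm{sol}})$ are nonempty, so the constructed waveguide is a nontrivial perturbation. The main obstacle in the whole argument is precisely the elimination of the case $s^{\eps+}=-2$, which is why the problem is set up to cancel $\Re e\,s^{\eps+}$ rather than $\Im m\,s^{\eps+}$ (see the footnote after (\ref{FormulaCoeffs})): because the first-order term of $s^{\eps+}$ is purely imaginary, one cannot control its real part at leading order and must instead rely on the smallness of the whole coefficient to rule out the unwanted branch.
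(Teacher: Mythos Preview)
Your proof is correct and follows essentially the same route as the paper: fix $k$, choose $x_1,x_2,x_3$ so that $\mathbb{M}$ is invertible, apply Proposition~\ref{propoTech} and the Banach fixed-point theorem to obtain $\mathfrak{t}^{\mrm{sol}}$, translate back to $\tau^{\mrm{sol}}$, use energy conservation plus the smallness $|s^{\eps+}(\tau^{\mrm{sol}})|=O(\eps)$ to discard the branch $s^{\eps+}=-2$, and finally observe that $h^{\mrm{sol}}_m>0$ so the perturbation is nontrivial.

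There is one slip in your last paragraph: you write that the problem is set up ``to cancel $\Re e\,s^{\eps+}$ rather than $\Im m\,s^{\eps+}$'', but it is the other way around (as you yourself correctly stated two paragraphs earlier). The fixed-point equations impose $\Im m\,s^{\eps+}=0$; the footnote explains that imposing $\Re e\,s^{\eps+}=0$ directly would be simpler but is impossible precisely because the first-order term of $s^{\eps+}$ is purely imaginary, so its real part carries no information at leading order.
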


\begin{remark}\label{RmkObstruction}
Let us check that the result of Proposition \ref{propositionMainResult} is not incompatible with the one of Proposition \ref{MainProposition} (obstruction to transmission invisibility). Define the function $\psi$ such that $\psi(z)=0$ in $\Om^0$ and $\psi(z)=\sin(\pi(y-1)/(2h^{\mrm{sol}}_m))$ in $\mathscr{S}^{\eps}_m$. Note that $\psi$ belongs to the space $\mY=\{\psi\in\mH^1(\Om_b)\,|\,\int_{\{x_{+}\}\times\om}\psi\,d\sigma=\int_{\{-x_{-}\}\times\om} \psi\,d\sigma=0\}$ appearing just before (\ref{EigenPb}), where here $\Om_b=\{z\in\Om^{\eps}(\tau^{\mrm{sol}}\})\,|\,-x_-<x<x_+ \}$. A direct calculation yields
\[
\int_{\Om_b}|\nabla \psi|^2\,dx \le \underset{m=1,2,3}{\max}(\pi/(2h^{\mrm{sol}}_m))^2\int_{\Om_b}|\psi|^2\,dx .
\]
Using the \textit{min-max} principle, we deduce that the first eigenvalue $\mu_1$ of Problem (\ref{EigenPb}) is smaller than $\max_{m=1,2,3}(\pi/(2h^{\mrm{sol}}_m))^2=k^2/4+O(\eps^{1/2}(1+|\ln\eps|))$. From Proposition \ref{MainProposition}, we infer that in $\Om^{\eps}(\tau^{\mrm{sol}})$ (this geometry is defined for a given/frozen $k$) we cannot have transmission invisibility for wavenumbers $\tilde{k}$ such that $\tilde{k}\lesssim k/2$. Of course, this does not prevent from having transmission invisibility for $\tilde{k}=k$. 
\end{remark}

\begin{remark}
Let us recall that the positions $x_1$, $x_2$, $x_3$ of the chimneys must be chosen in order to ensure the invertibility of the matrix $\mathbb{M}$ given by (\ref{defMatrixM}). If we impose $x_3-x_2=x_2-x_1=\eta>0$, we obtain $\det\,\mathbb{M}=2 \sin(2k \eta)(1-\cos(2k \eta))$. 
As a consequence, the matrix $\mathbb{M}$ is invertible as soon as $\eta\notin \{p\pi/2k\,|\,p\in \N\}$: the distance of two consecutive chimneys should not be a multiple of a quarter of the wavelength. 
\end{remark}

\section{Numerical experiments}\label{SectionNumericalExperiments}
We implement numerically the approach developed in Section 
\ref{SectionConstruction}. For a given wavenumber $k\in(0;\pi)$, we consider the scattering problem 
\begin{equation}\label{PbChampTotalNumerical}
\begin{array}{|rcll}
\multicolumn{4}{|l}{\mbox{Find }u^{\eps}\in\mrm{H}^1_{\loc}(\Om^{\eps}) \mbox{ such that }u^{\eps}-w^+\mbox{ is outgoing and } }\\[3pt]
-\Delta u^{\eps}  & = & k^2 u^{\eps} & \mbox{ in }\Om^{\eps}\\[3pt]
\partial_{\nu} u^{\eps}  & = & 0  & \mbox{ on }\Gamma^{\eps}.\end{array}
\end{equation}
Our goal is to build $\Om^{\eps}$ such that the scattered field $u^{\eps}_{\mrm{s}}=u^{\eps}-w^+$ is exponentially decaying at $\pm\infty$.  Following (\ref{defPerturbedDomain}), we search for $\Om^{\eps}$ of the form $\Om^{\eps}(\tau)=\Om^0\cup \mathscr{S}^{\eps}_1(\tau)\cup \mathscr{S}^{\eps}_2(\tau)\cup \mathscr{S}^{\eps}_3(\tau)$, with $\Om^0=\R\times(0;1)$ and, for $m=1,\dots,3$,
\begin{equation}\label{expressionDomain}
\mathscr{S}^{\eps}_m(\tau) = (-\eps/2+x_m;x_m+\eps/2)\times[1;1+\pi/k+\tau_m).
\end{equation}
In accordance with what precedes (\ref{ChoiceMatrix}), we choose $x_1=-3\pi/(4k)$, $x_2=0$ and $x_3=-x_1$. 
Now, to determine the parameters $\tau_m$ of (\ref{expressionDomain}), we solve the fixed point problem (\ref{PbFixedPoint}) using a recursive procedure.\\
\newline
We denote $\tau^{j}=(\tau^{j}_{1},\tau^{j}_{2},\tau^{j}_{3})^{\top}\in(-\pi/(2k);\pi/(2k))^3$ the value of $\tau=(\tau_{1},\tau_{2},\tau_{3})^{\top}$ at iteration $j\ge0$. For $m=1,2,3$, we define $\mathfrak{t}^{j}_m=\tan(k\tau^{j}_m)$ and we denote $\mathfrak{t}^{j}=(\mathfrak{t}^{j}_1,\mathfrak{t}^{j}_2,\mathfrak{t}^{j}_3)^{\top}$. We set $\mathfrak{t}^{0}=(0,0,0)^{\top}$. Then, in accordance with (\ref{PbFixedPoint}), recursively we define $\mathfrak{t}^{j}$ using the formula $\mathfrak{t}^{j+1} = \mathscr{F}^{\eps}(\mathfrak{t}^j)$. From (\ref{FormulaCoeffs}), (\ref{DefMapContra}), one obtains that this is equivalent to define 
\begin{equation}\label{EcriturePtFixe}
\begin{array}{|lcl}
\mathfrak{t}^{j+1}=\mathfrak{t}^{j}+2\eps^{-1}\mathbb{M}^{-1}(\Re e\,(is^{\eps-}(\tau^{j})),\Im m\,(is^{\eps-}(\tau^j)),\Re e\,(is^{\eps+}(\tau^{j})))^{\top}.
\end{array}
\end{equation}
In (\ref{EcriturePtFixe}), the coefficients $s^{\eps\pm}(\tau^j)$ are computed at each step $j\ge0$ solving the scattering problem 
\begin{equation}\label{PbChampTotalNumericalSteps}
\begin{array}{|rcll}
\multicolumn{4}{|l}{\mbox{Find }u^{\eps}(\tau^j)\in\mrm{H}^1_{\loc}(\Om^{\eps}(\tau^j)) \mbox{ such that }u^{\eps}(\tau^j)-w^+\mbox{ is outgoing and } }\\[3pt]
-\Delta u^{\eps}(\tau^j)  & = & k^2 u^{\eps}(\tau^j) & \mbox{ in }\Om^{\eps}(\tau^j)\\[3pt]
\partial_{\nu} u^{\eps}(\tau^j)  & = & 0  & \mbox{ on }\Gamma^{\eps}(\tau^j):=\partial\Om^{\eps}(\tau^j).\end{array}
\end{equation}
Note that the domain $\Om^{\eps}(\tau^j)$ depends on the step $j\ge0$. More precisely, $\Om^{\eps}(\tau^j)$ is defined by $\Om^{\eps}(\tau^j)=\Om^0\cup \mathscr{S}^{\eps}_1(\tau^j)\cup \mathscr{S}^{\eps}_2(\tau^j)\cup \mathscr{S}^{\eps}_3(\tau^j)$, with, for $m=1,\dots,3$,
\[
\mathscr{S}^{\eps}_m(\tau^j) =(-\eps/2+x_m;x_m+\eps/2)\times[1;1+\pi/k+\tau^j_m)\mbox{ and }\tau^j_m=\arctan(\mathfrak{t}^{j}_m)/k\in(-\pi/(2k);\pi/(2k)).
\]
Then according to formula (\ref{expressionReflecCoeff}), the coefficients $s^{\eps\pm}(\tau^j)$ are given by
\begin{equation}\label{DefTermeChampLointain}
\begin{array}{lcl}
 i\,s^{\eps\pm}(\tau^j) & = &\dsp\int_{\Sigma_{-L}\cup\Sigma_{+L}} \frac{\partial u^{\eps}_s(\tau^j)}{\partial\nu}\,\overline{w^{\pm}}-u_s^{\eps}(\tau^j)\frac{\partial\overline{w^{\pm}}}{\partial\nu}\,d\sigma\\[12pt]
 &= &\dsp\int_{\Sigma_{-L}\cup\Sigma_{+L}} \frac{\partial u^{\eps}(\tau^j)}{\partial\nu}\,\overline{w^{\pm}}-u^{\eps}(\tau^j)\frac{\partial\overline{w^{\pm}}}{\partial\nu}\,d\sigma.
\end{array}
\end{equation}
At each step $j\ge0$, we approximate the solution of Problem (\ref{PbChampTotalNumericalSteps}) with a P2 finite element method in $\Om^{\eps}_5(\tau^j):=\{z=(x,y)\in\Om^{\eps}(\tau^j)\,|\,|x|<5\}$. At $x=\pm 5$, a truncated Dirichlet-to-Neumann map with 20 terms serves as a transparent boundary condition. We emphasize that we consider such a long domain (numerically, this is no necessity to do this) just to obtain nice pictures. For the numerical experiments, the wavenumber $k$ is set to $k=0.8\,\pi$. For the simulations of Figures \ref{figResult1}--\ref{figResult2}, we take $\eps=0.3$ and we stop the procedure when $\sum_{m=1}^3|\mathfrak{t}^{j+1}_{m}-\mathfrak{t}^{j}_{m}|\le 10^{-9}$ (corresponding here to 12 iterations). To obtain the results of Figure \ref{figResult3}, we perform 15 iterations and we try several values of $\eps$. Computations are implemented with \textit{FreeFem++}\footnote{\textit{FreeFem++}, \url{http://www.freefem.org/ff++/}.} while  results are displayed with \textit{Matlab}\footnote{\textit{Matlab}, \url{http://www.mathworks.com/}.} and \textit{Paraview}\footnote{\textit{Paraview}, \url{http://www.paraview.org/}.}.

\begin{figure}[!ht]
\centering
\includegraphics[width=15cm]{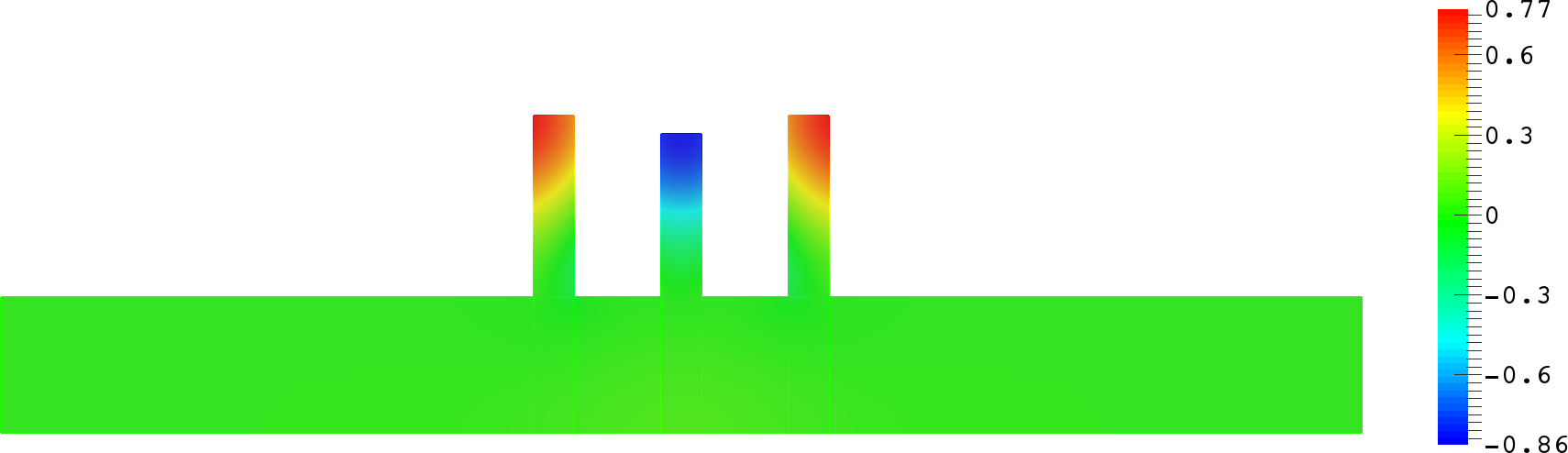}\\[14pt]\includegraphics[width=15cm]{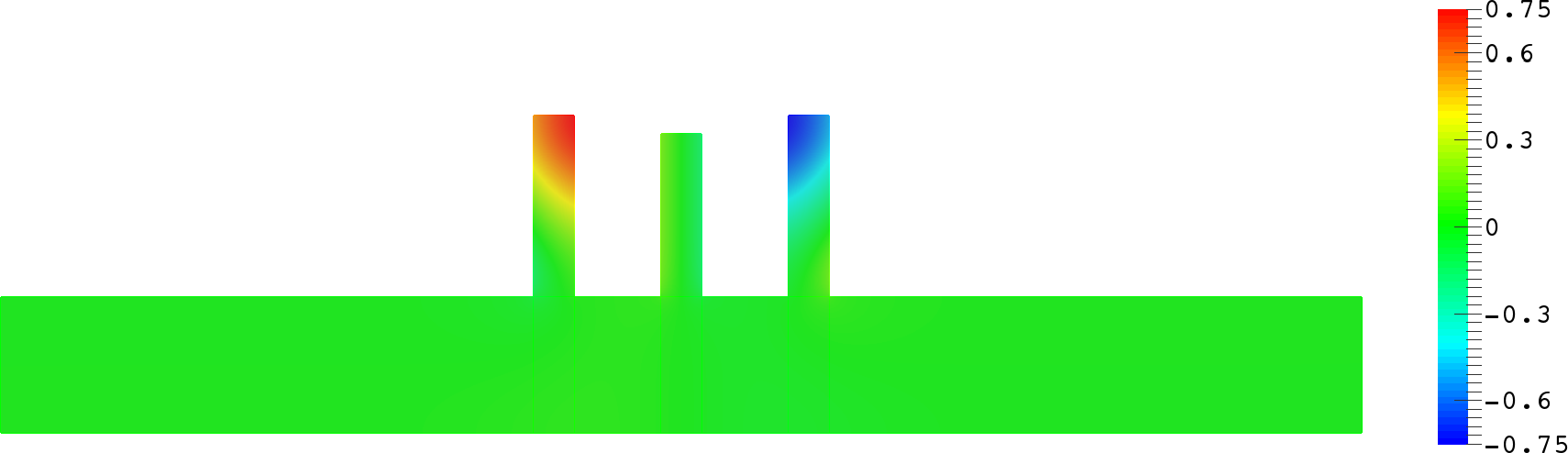} 
\caption{Real part (top) and imaginary part (bottom) of the approximation of the scattered field $u^{\eps}_s=u^{\eps}-u_i$ at the end of the fixed point procedure (12 iterations). As expected, the amplitude of the field is very small at $x=\pm5$. Interestingly, the fixed point procedure converges though the parameter $\eps$ (the width of the vertical rectangles) is not very small (here $\eps=0.3$). 
\label{figResult1}}
\end{figure}

\begin{figure}[!ht]
\centering
\includegraphics[width=15cm]{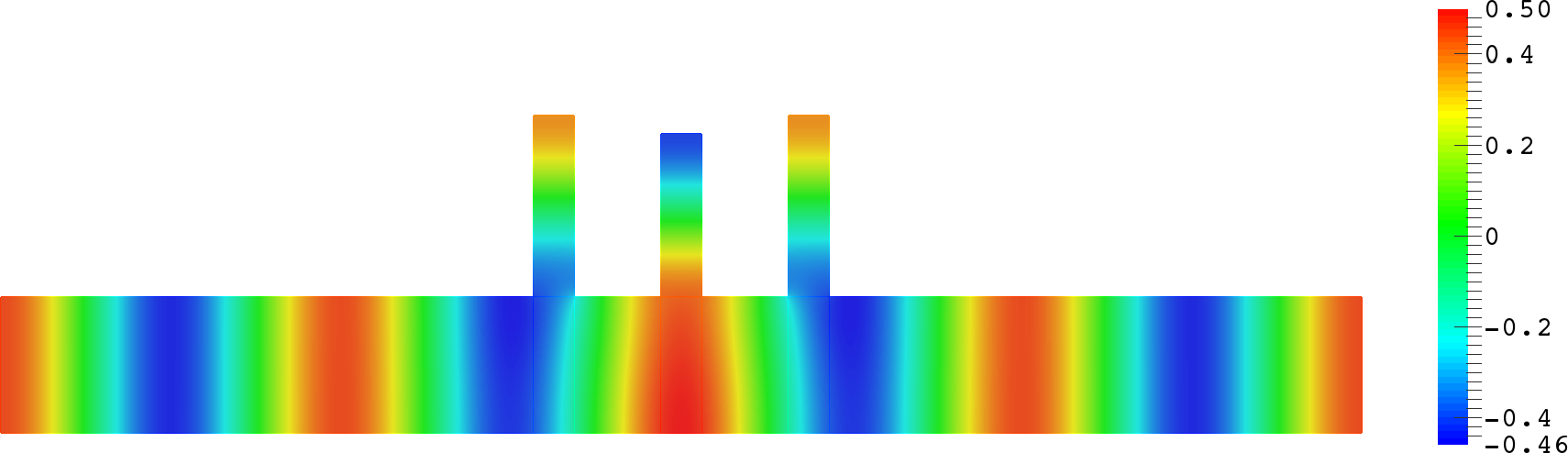}\\[14pt]\includegraphics[width=15cm]{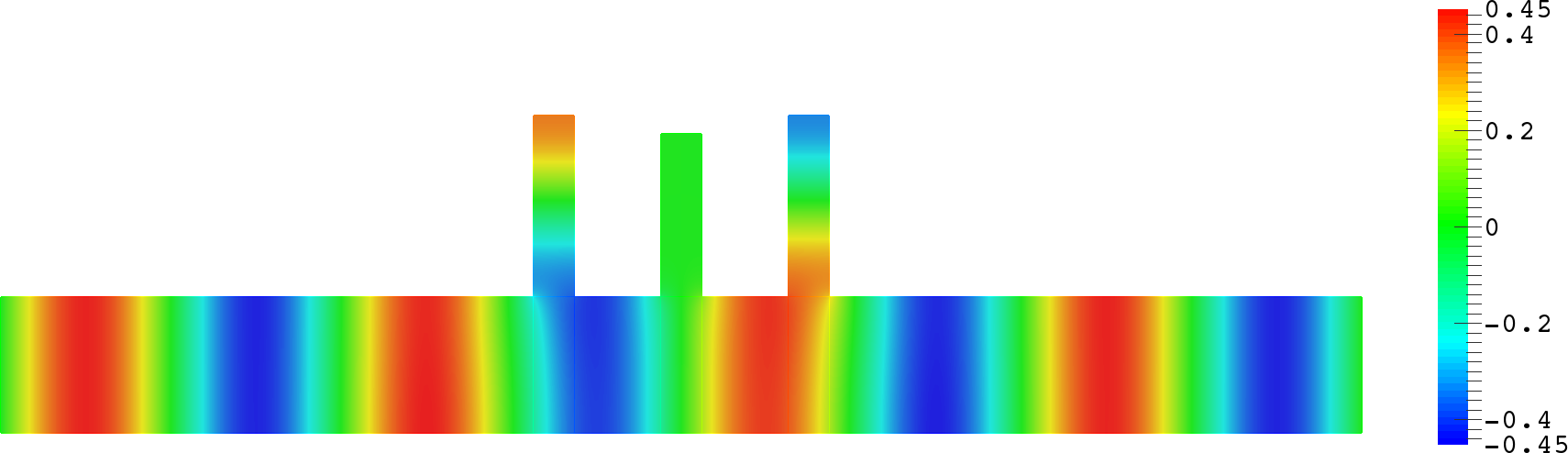}
\caption{Real part (top) and imaginary part (bottom) of the approximation of the total field $u^{\eps}$ at the end of the fixed point procedure (12 iterations). 
\label{figResult2}
}
\end{figure}

\begin{figure}[!ht]
\centering
\includegraphics[width=15cm]{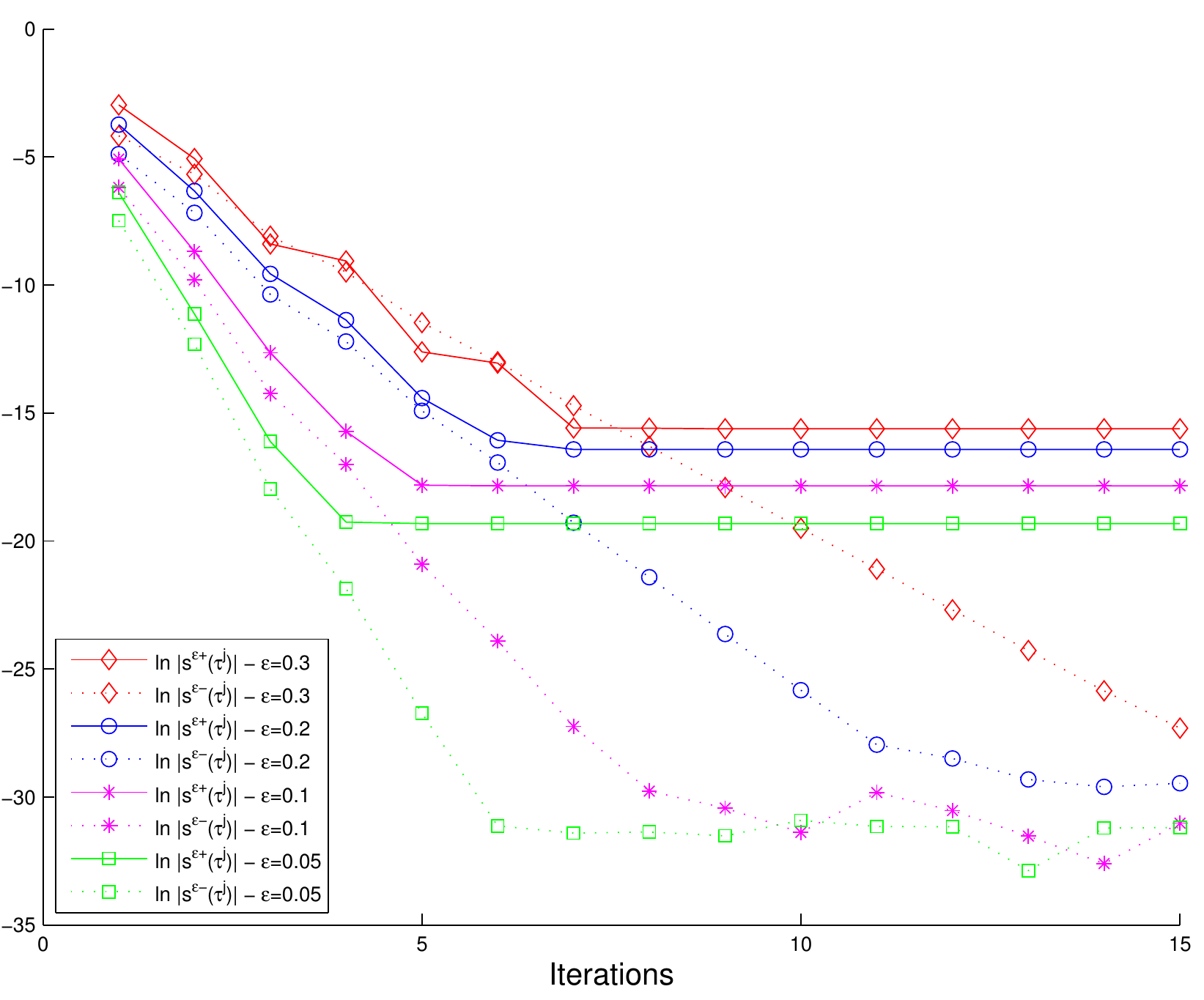}
\caption{Curves $\ln |s^{\eps+}(\tau^j)|$ and $\ln |s^{\eps-}(\tau^j)|$ with respect to the number of iterations $j$ for several values of $\eps$. The smaller $\eps$, the faster the iterative procedure converges.\label{figResult3}}
\end{figure}

\newpage~

\section{Justification of asymptotics}\label{SectionJustifAsym}

\noindent In this section, we show that the map $\mathscr{F}^{\eps}$ is a contraction as required in the analysis leading to Proposition \ref{propositionMainResult}.
\begin{proposition}\label{propoTech}
Consider some $\gamma>0$ sufficiently small. Then, there exists $\eps_{\gamma}>0$ such that for all $\eps\in(0;\eps_{\gamma}]$, the map $\mathscr{F}^{\eps}$ defined by (\ref{DefMapContra}) is a contraction of $\mrm{B}_\gamma:=\{\mathfrak{t}\in\R^{3}\,\big|\,|\mathfrak{t}| \le \gamma\}$.
\end{proposition}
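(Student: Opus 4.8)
The plan is to verify the two defining properties of a contraction of $\mrm{B}_\gamma$: that $\mathscr{F}^{\eps}$ maps $\mrm{B}_\gamma$ into itself, and that it is Lipschitz on $\mrm{B}_\gamma$ with a constant strictly less than one. Inspecting the definition (\ref{DefMapContra}) of $\mathscr{F}^{\eps}$, both properties reduce to controlling, uniformly over the corresponding bounded set of parameters $\tau$ (recall $\mathfrak{t}_m=\tan(k\tau_m)$), the remainder $\tilde{s}^{\eps\pm}(\tau)$ of (\ref{FormulaCoeffs}) together with its dependence on $\tau$. Write $\mrm{B}'_\gamma$ for the image of $\mrm{B}_\gamma$ under $\mathfrak{t}\mapsto\tau$. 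The invertible matrix $\mathbb{M}$ of (\ref{defMatrixM})--(\ref{ChoiceMatrix}) is fixed once and for all, so $\|\mathbb{M}^{-1}\|$ is a harmless constant; the whole difficulty lies in the factor $2\eps^{-1}$, which forces us to prove that $\tilde{s}^{\eps\pm}(\tau)$ is of order $o(\eps)$, and in fact $O(\eps^{3/2}(1+|\ln\eps|))$, with the implied constant independent of $\eps$ and $\tau$.

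The first ingredient I would establish is the uniform bound
\begin{equation*}
|\tilde{s}^{\eps\pm}(\tau)| \le C\,\eps^{3/2}(1+|\ln\eps|),\qquad \tau\in\mrm{B}'_\gamma,
\end{equation*}
with $C>0$ independent of $\eps$ and $\tau$. This is precisely the rigorous justification of the formal construction of Section \ref{describExpansion}. Concretely, I would assemble the approximate field $u^{\eps}_{\mrm{app}}$ from the ansatz (\ref{DefAnsatz}) (with $A_m=a_m$ and $V^1_m$ given by Lemma \ref{lemmaTechnique}), insert it into (\ref{PbChampTotal2D}), and estimate the resulting discrepancy in $\mrm{L}^2$; the computations already carried out in Section \ref{describExpansion}, in particular the bound $\|f^1_m\,\eps\ln\eps\|_{\mrm{L}^2(\mathscr{S}^{\eps}_m)}\le C\eps^{3/2}(1+|\ln\eps|)$, furnish exactly this order. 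Invoking the well-posedness of the scattering problem with a stability constant uniform in $\eps$ small and in $\tau\in\mrm{B}'_\gamma$ (the domains $\Om^{\eps}(\tau)$ being uniformly close to $\Om^0$, cf.\ \cite{JoTo06}), one controls $u^{\eps}(\tau)-u^{\eps}_{\mrm{app}}$ in $\mH^1$ near $\Sigma_{\pm L}$, then propagates the bound to the coefficients through the integral representation (\ref{expressionReflecCoeff}). Uniformity in $\tau$ follows from the smooth dependence of all profiles on the heights $h_m=\pi/k+\tau_m$, which stay away from the resonant set (\ref{NonResoning}) for $\tau$ small.

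The second ingredient, and the genuine obstacle, is the matching Lipschitz estimate
\begin{equation*}
|\tilde{s}^{\eps\pm}(\tau^1)-\tilde{s}^{\eps\pm}(\tau^2)|\le C\,\eps^{3/2}(1+|\ln\eps|)\,|\tau^1-\tau^2|,\qquad \tau^1,\tau^2\in\mrm{B}'_\gamma.
\end{equation*}
The difficulty is that the domain $\Om^{\eps}(\tau)$ itself varies with $\tau$, so $u^{\eps}(\tau^1)$ and $u^{\eps}(\tau^2)$ a priori live on different sets. I would remove this by transporting the problem onto a fixed domain through an explicit $\tau$-dependent diffeomorphism acting only inside the rectangles $\mathscr{S}^{\eps}_m$ (rescaling the vertical variable by $(\pi/k+\tau_m)^{-1}$), which turns the geometric dependence into a smooth, uniformly bounded dependence of the coefficients of the transported operator. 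The difference of the two transported solutions then solves a problem whose right-hand side is $O(|\tau^1-\tau^2|)$ times quantities of the same size as those controlling $\tilde{s}^{\eps\pm}$ itself; differentiating the asymptotic construction of Section \ref{describExpansion} with respect to $\tau$ and re-running the residual estimate yields the stated bound (note that the $\tau$-derivatives of the explicit leading term and of $is^{\eps\pm}$ cancel, leaving only the remainder's derivative). Since $\mathfrak{t}\mapsto\tau$ and its inverse are smooth with bounded derivatives on the compact $\overline{\mrm{B}_\gamma}$, this transfers to a Lipschitz estimate in the variable $\mathfrak{t}$.

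Combining the two ingredients concludes the proof. For self-mapping, the uniform bound gives
\begin{equation*}
|\mathscr{F}^{\eps}(\mathfrak{t})|\le 2\sqrt{3}\,\eps^{-1}\|\mathbb{M}^{-1}\|\,C\,\eps^{3/2}(1+|\ln\eps|)=C'\eps^{1/2}(1+|\ln\eps|)
\end{equation*}
for all $\mathfrak{t}\in\mrm{B}_\gamma$, so $|\mathscr{F}^{\eps}(\mathfrak{t})|\le\gamma$ once $\eps$ is small. Likewise the Lipschitz estimate bounds the contraction constant by $C''\eps^{1/2}(1+|\ln\eps|)$, which is $<1$ for $\eps$ small. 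Both conditions hold simultaneously for $\eps\in(0;\eps_\gamma]$ with $\eps_\gamma$ chosen small enough, which is exactly the assertion. The technical heart, as indicated, is the justification of the asymptotics with error bounds that are uniform and Lipschitz in $\tau$ despite the $\tau$-dependence of the geometry; the transport to a fixed domain is what renders this tractable.
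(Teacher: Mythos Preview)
Your approach is correct and essentially coincides with the paper's: both reduce the contraction property to the Lipschitz estimate $|\tilde{s}^{\eps\pm}(\tau)-\tilde{s}^{\eps\pm}(\tau')|\le C\,\eps^{3/2}(1+|\ln\eps|)\,|\tau-\tau'|$, both justify this by assembling the asymptotic ansatz of Section~\ref{describExpansion}, bounding the residual, and invoking a uniform stability estimate after transporting the problem to a fixed domain via a $\tau$-dependent diffeomorphism acting only in the rectangles. The only organizational difference is that the paper first proves the error estimate at $\tau=0$ (Proposition~\ref{PropositionErrorEstimate}) and then obtains the uniform bound on $|\mathscr{F}^{\eps}(\mathfrak{t})|$ by combining $|\mathscr{F}^{\eps}(0)|\le C\,\eps^{1/2}(1+|\ln\eps|)$ with the Lipschitz estimate, whereas you state the uniform bound for all $\tau$ directly; this is immaterial since either route yields the same two conclusions.
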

\begin{proof}
For $\mathfrak{t}=(\mathfrak{t}_1,\mathfrak{t}_2,\mathfrak{t}_3)^{\top}\in\R^{3}$, we have $\mathscr{F}^{\eps}(\mathfrak{t})= 2\eps^{-1}\,\mathbb{M}^{-1}(\Re e\,\tilde{s}^{\eps-}(\tau),\Im m\,\tilde{s}^{\eps-}(\tau),\Re e\,\tilde{s}^{\eps+}(\tau))^{\top}$, where $\mathbb{M}$ is defined in (\ref{defMatrixM}), $\tau=(\tau_1,\tau_2,\tau_3)^{\top}\in(-\pi/(2k);\pi/(2k))^3$ is such that $\mathfrak{t}_m=\tan(k\tau_m)$ for $m=1,2,3$ and $\tilde{s}^{\eps\pm}(\tau)$ are the remainders appearing in (\ref{FormulaCoeffs}). In the following (see \S\ref{paragraphDefiRemainder}), we will prove that for $\vartheta$ small enough, there is some $\eps_{\vartheta}>0$ such that for all $\eps\in(0;\eps_{\vartheta}]$, there holds 
\begin{equation}\label{MainEstimate}
|\tilde{s}^{\eps\pm}(\tau)-\tilde{s}^{\eps\pm}(\tau')| \le C\,\eps^{3/2}\,(1+|\ln\eps|)\,|\tau-\tau'|,\qquad \forall\tau,\,\tau'\in \mrm{B}_{\vartheta}=\{\tau\in\R^{3}\,\big|\,|\tau| \le \vartheta\}.
\end{equation}
Here and in what follows, $C>0$ is a constant which may change from one occurrence to another but which is independent of $\eps$ and $\mathfrak{t},\mathfrak{t}'\in\mrm{B}_\gamma$. Since $\tau$ belongs to $\mrm{B}_{k^{-1}\arctan(\gamma)}$ when $\mathfrak{t}\in \mrm{B}_\gamma$, (\ref{MainEstimate}) yields 
\begin{equation}\label{EstimContra}
|\mathscr{F}^{\eps}(\mathfrak{t})-\mathscr{F}^{\eps}(\mathfrak{t}')|\le C\,\eps^{1/2}\,(1+|\ln\eps|)\,|\mathfrak{t}-\mathfrak{t}'|,\qquad \forall\mathfrak{t},\mathfrak{t}'\in\mrm{B}_\gamma,
\end{equation}
when $\gamma$ is chosen sufficiently small. Taking $\mathfrak{t}'=0$ in (\ref{EstimContra}) and remarking that $|\mathscr{F}^{\eps}(0)| \le C\,\eps^{1/2}\,(1+|\ln\eps|)$ (use Proposition \ref{PropositionErrorEstimate} below to show this), we find $|\mathscr{F}^{\eps}(\mathfrak{t})| \le C\,\eps^{1/2}\,(1+|\ln\eps|)$ for all $\mathfrak{t}\in\mrm{B}_\gamma$. With (\ref{EstimContra}), this concludes the proposition.
\end{proof}
\begin{remark}\label{remarkFirstOrder}
\noindent Let us denote $\mathfrak{t}^{\mrm{sol}}\in\mrm{B}_\gamma$ the unique solution to Problem (\ref{PbFixedPoint}). The previous proof ensures that we have
\begin{equation}\label{RelAPos}
|\mathfrak{t}^{\mrm{sol}}| = |\mathscr{F}^{\eps}(\mathfrak{t}^{\mrm{sol}})| \le C\,\eps^{1/2}\,(1+|\ln\eps|),\qquad \forall\eps\in(0;\eps_{\gamma}].
\end{equation}
Introduce $\tau^{\mrm{sol}}\in(-\pi/(2k);\pi/(2k))^3$ the vector such that $\mathfrak{t}^{\mrm{sol}}_m=\tan(k\tau^{\mrm{sol}}_m)$ for $m=1,2,3$. From (\ref{RelAPos}), we obtain $|\tau^{\mrm{sol}}|\le C\,\eps^{1/2}\,(1+|\ln\eps|)$. As a consequence, we can say that the height of the small rectangles $h^{\mrm{sol}}_m := \pi/k + \tau^{\mrm{sol}}_m$ satisfies $h^{\mrm{sol}}_m \approx \pi/k$ as $\eps$ tends to zero. 
\end{remark}
\noindent It remains to establish estimate (\ref{MainEstimate}), the main task of the section. The proof will be divided into several steps and will be the concern of the next three parts. 

\subsection{Reduction to a bounded domain}

We set $\Om^{\eps}_{L}(\tau):=\{z=(x,y)\in\Om^{\eps}(\tau)\,|\,|x|\le L\}$ where $\Om^{\eps}(\tau)$ is defined just before (\ref{expressionDomain}). Classically, one shows that $u^{\eps}(\tau)$ satisfies the scattering problem (\ref{PbChampTotal2D}) in $\Om^{\eps}(\tau)$ if and only if it is a solution to 
\begin{equation}\label{FormulationRef}
\begin{array}{|rcll}
\multicolumn{4}{|l}{\mbox{Find }u^{\eps}(\tau)\in\mrm{H}^1(\Om^{\eps}_L(\tau)) \mbox{ such that } }\\[3pt]
-\Delta u^{\eps}(\tau)  & = & k^2 u^{\eps}(\tau) & \mbox{ in }\Om^{\eps}_L(\tau)\\[3pt]
\partial_{\nu} u^{\eps}(\tau)  & = & 0  & \mbox{ on }\partial\Om^{\eps}_L(\tau)\cap\partial\Om^{\eps}(\tau)\\[3pt]
\partial_{\nu}(u^{\eps}(\tau)-w^+)  & = & \mrm{T}^{\pm}(u^{\eps}(\tau)-w^+)  & \mbox{ on }\Sigma_{\pm L}.
\end{array}
\end{equation}
In (\ref{FormulationRef}), $\mrm{T}^{\pm}$ are the standard Dirichlet-to-Neumann operators on $\Sigma_{\pm L}$. With the Riesz representation theorem, we introduce the bounded operator $\mrm{A}^{\eps}(\tau):\mH^1(\Om^{\eps}_L(\tau))\to\mH^1(\Om^{\eps}_L(\tau))$ and the function $f^{\eps}(\tau)\in\mH^1(\Om^{\eps}_L(\tau))$, such that, for all $\varphi$, $\varphi'\in\mH^1(\Om^{\eps}_L(\tau))$,
\begin{equation}\label{defOpA}
\hspace{-0.2cm}\begin{array}{|ll}
(\mrm{A}^{\eps}(\tau)\varphi,\varphi')_{\mH^1(\Om^{\eps}_L(\tau))}=\dsp\int_{\Om^{\eps}_L(\tau)} \nabla\varphi\cdot\overline{\nabla\varphi'}\,dz-k^2\int_{\Om^{\eps}_L(\tau)} \varphi\,\overline{\varphi'}\,dz-\langle\mrm{T}^{+}\varphi,\overline{\varphi'}\rangle_{\Sigma_{L}}-\langle\mrm{T}^{-}\varphi,\overline{\varphi'}\rangle_{\Sigma_{-L}} \\[15pt]
(f^{\eps}(\tau),\varphi')_{\mH^1(\Om^{\eps}_L(\tau))}=\langle\partial_{\nu} w^+-\mrm{T}^{+}w^+,\overline{\varphi'}\rangle_{\Sigma_{L}}+\langle\partial_{\nu} w^+-\mrm{T}^{-}w^+,\overline{\varphi'}\rangle_{\Sigma_{-L}}. 
\end{array}
\end{equation}
Here, $\langle\cdot,\cdot\rangle_{\Sigma_{\pm L}}$ denotes the duality pairing $\mH^{-1/2}(\Sigma_{\pm L})\times\mH^{1/2}(\Sigma_{\pm L})$. The function $u^{\eps}(\tau)\in\mH^1(\Om^{\eps}_L(\tau))$ is a solution of (\ref{FormulationRef}) if and only if it satisfies 
\begin{equation}\label{ReformulationRiesz}
\begin{array}{|rcll}
\multicolumn{4}{|l}{\mbox{Find }u^{\eps}(\tau)\in\mrm{H}^1(\Om^{\eps}_L(\tau)) \mbox{ such that } }\\[3pt]
\mrm{A}^{\eps}(\tau)u^{\eps}(\tau)=f^{\eps}(\tau).
\end{array}
\end{equation}
For any given $\tau\in(-\pi/(2k);\pi/(2k))^3$, following for example the proof of Lemma 3 in \cite{CDJT06} (again see also the previous studies \cite{Beal73,Gady93,KoMM94,Naza96}) and working by contradiction, we can prove that $\mrm{A}^{\eps}(\tau)$ is invertible for $\eps$ small enough with the stability estimate
\begin{equation}\label{StabilityEstimate}
\|(\mrm{A}^{\eps}(\tau))^{-1}\| \le C_{\tau}.
\end{equation}
In (\ref{StabilityEstimate}), $\|\cdot\|$ refers to the usual norm for linear operators of $\mrm{H}^1(\Om^{\eps}_L)$ and $C_{\tau}>0$ is a constant independent of $\eps$ (\textit{a priori} it may depend on $\tau$). 

\subsection{Error estimate for $\tau=0$}

We first establish error estimates with respect to $\eps$ for $\tau=0$. To shorten notation, we shall write $\mrm{A}^{\eps}$, $u^{\eps}$ instead of $\mrm{A}^{\eps}(0)$, $u^{\eps}(0)$. In general, the asymptotic expansion of $u^{\eps}$ constructed in \S\ref{describExpansion} (see (\ref{DefAnsatz})) is not in $\mH^1(\Om^{\eps})$ due to a jump of traces through $\Upsilon^{\eps}_m$ of order $\eps^2$. This is a drawback to establish error estimates and leads us to propose another approximation in $\Om^0$. For $m=1,2,3$, introduce $\hat{\zeta}_m\in\mathscr{C}^{\infty}(\R^2,[0;1])$ a cut-off function such that $\hat{\zeta}_m=1$ for $|z-M_m|\le1$ and $\hat{\zeta}_m=0$ for $|z-M_m|\ge2$. Set $\zeta^{\eps}_m(z):=\hat{\zeta}_m(z/\eps)$ and  $\zeta^{\eps}=1-\zeta^{\eps}_1-\zeta^{\eps}_2-\zeta^{\eps}_3$. Finally define $u^{\eps}_{\mrm{as}}$ such that
\begin{equation}\label{DefAnsatzBis1}
\begin{array}{lcl}
u^{\eps}_{\mrm{as}}(z)= \zeta^{\eps}(z)(u^{0}(z)+\eps\,u^{1}(z))
& + &\dsp\sum_{m=1}^3\zeta^{\eps}_m(z)(u^{0}(M_m)+(x-x_m)\partial_x u^{0}(M_m)+\eps\,u^{1}(M_m))\\[10pt]
& + &\dsp\sum_{m=1}^3\zeta_m(z)(\eps\,(\ln\eps\,a_m+V^{1}_m(\xi^m))),\qquad z=(x,y)\in\Om^0,
\end{array}
\end{equation}
and, for $m=1,2,3$,
\begin{equation}\label{DefAnsatzBis2}
u^{\eps}_{\mrm{as}}(z)= v^{0}_m(y)+\zeta_m(z)(\eps\,(\ln\eps\,a_m+V^{1}_m(\xi^m))),\qquad z=(x,y)\in\mathscr{S}^{\eps}_m.
\end{equation}
Here $u^{0}$, $u^{1}$, $v^0_m$, $V^{1}_m$, $\zeta_m$ are the same terms as in (\ref{DefAnsatz}) while $a_m$ are the constants defined in (\ref{defAm}). Note that for $z=(x,y)$ such that $|x|\ge L/2$, we have $(\ref{DefAnsatzBis1})=(\ref{DefAnsatz})=u^{0}+\eps\,u^{1}$. As a consequence, the two approximations (\ref{DefAnsatzBis1}) and (\ref{DefAnsatz})
yield the same first term $s^{1\pm}$ (see  (\ref{expressionReflecCoeffterThree})) in the asymptotic expansion of $s^{\eps\pm}$. 
\begin{proposition}\label{PropositionErrorEstimate}
There is $\eps_0>0$ such that for all $\eps\in(0;\eps_0]$ we have
\begin{equation}\label{errorEstimate}
\|u^{\eps}-u^{\eps}_{\mrm{as}}\|_{\mH^1(\Om^{\eps}_L)} \le C\,\eps^{3/2}(1+|\ln\eps|).
\end{equation}
\end{proposition}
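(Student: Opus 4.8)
The plan is to verify that $u^{\eps}_{\mrm{as}}$ is an approximate solution of the variational problem $\mrm{A}^{\eps}u^{\eps}=f^{\eps}$, and then to invoke the stability estimate \eqref{StabilityEstimate} to transfer the smallness of the consistency error into a smallness of the true error $u^{\eps}-u^{\eps}_{\mrm{as}}$. Concretely, since $\mrm{A}^{\eps}u^{\eps}=f^{\eps}$, we have
\[
\mrm{A}^{\eps}(u^{\eps}-u^{\eps}_{\mrm{as}})=f^{\eps}-\mrm{A}^{\eps}u^{\eps}_{\mrm{as}}=:g^{\eps},
\]
so that, by \eqref{StabilityEstimate},
\[
\|u^{\eps}-u^{\eps}_{\mrm{as}}\|_{\mH^1(\Om^{\eps}_L)}\le C\,\|g^{\eps}\|_{\mH^1(\Om^{\eps}_L)}.
\]
Hence the whole task reduces to showing that the residual functional
\[
\varphi'\mapsto(g^{\eps},\varphi')_{\mH^1(\Om^{\eps}_L)}
=\int_{\Om^{\eps}_L}\nabla u^{\eps}_{\mrm{as}}\cdot\overline{\nabla\varphi'}-k^2 u^{\eps}_{\mrm{as}}\,\overline{\varphi'}\,dz-\langle\text{DtN terms}\rangle-(f^{\eps},\varphi')
\]
has dual norm bounded by $C\eps^{3/2}(1+|\ln\eps|)$.

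I would organize the estimation of $g^{\eps}$ by localizing the residual into three kinds of contributions, matching the three regions in the construction. First, in the interior of $\Om^0$ away from the rectangles, $u^{\eps}_{\mrm{as}}$ coincides with $u^0+\eps u^1$ up to the cut-off $\zeta^{\eps}$; the leading and first-order terms were \emph{designed} to cancel the source exactly through \eqref{PbChampTotalTerm1Bis}--\eqref{PbChampTotalTerm1}, so the surviving volume residual comes only from commutators of $(\Delta+k^2)$ with the cut-offs $\zeta^{\eps}_m$, $\zeta_m$ and from the $O(\eps^2)$ remainders in the Taylor and boundary-layer matchings. Second, inside each thin strip $\mathscr{S}^{\eps}_m$ the residual is controlled by the computation already recorded after \eqref{PbChampTotalTerm1Ter}, namely $\|f^1_m\,\eps\ln\eps\|_{\mL^2(\mathscr{S}^{\eps}_m)}\le C\eps^{3/2}(1+|\ln\eps|)$, the extra factor $\eps^{1/2}$ coming from the $O(\eps)$ width of the strip. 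Third, there are the \emph{interface} contributions on $\Upsilon^{\eps}_m$: the boundary-layer corrector $V^1_m$ was built precisely so that the jumps of traces and of normal derivatives across $\Upsilon^{\eps}_m$ are cancelled at order $\eps$ (see \eqref{PbJump}), so the remaining jump of traces is only $O(\eps^2)$ as noted at the start of \S\ref{SectionJustifAsym}; integrating such a jump against a test function over a segment of length $\eps$ and using a trace inequality yields again a contribution of the announced order.

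The main obstacle — and the point requiring genuine care rather than bookkeeping — is the estimation of the boundary-layer terms $\zeta_m V^1_m(\eps^{-1}(z-M_m))$ in the $\mH^1$ norm. These correctors do not decay fast: by \eqref{DecompoStripHPlane}, $V^1_m$ grows like $a_m\ln|\xi|$ in the half-plane $\R^2_-$, which is exactly the origin of the logarithmic factor $(1+|\ln\eps|)$ in \eqref{errorEstimate}. I would therefore estimate $\|\zeta_m V^1_m(\eps^{-1}\cdot)\|$ by passing to the fast variable $\xi=\eps^{-1}(z-M_m)$, where the cut-off $\zeta_m$ restricts $|\xi|\lesssim\eps^{-1}$, and then carefully account for the $\ln|\xi|$ growth together with the commutator terms $(\Delta+k^2)(\zeta_m\ln r_m)$; the subtraction of the explicit logarithmic shift $\eps\ln\eps\,a_m$ in the ansatz \eqref{DefAnsatz} is what makes the leading logarithm integrable and keeps the power $\eps^{3/2}$. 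A secondary technical point is to confirm that $u^{\eps}_{\mrm{as}}\in\mH^1(\Om^{\eps}_L)$ genuinely holds, i.e. that the modified ansatz \eqref{DefAnsatzBis1}--\eqref{DefAnsatzBis2} has removed the spurious $O(\eps^2)$ trace jump that the original expansion \eqref{DefAnsatz} carried across $\Upsilon^{\eps}_m$, so that the residual can be represented purely as a bounded linear functional and no distributional interface terms are left over. Once all three regional contributions are summed and the worst factor $\eps^{3/2}(1+|\ln\eps|)$ is extracted, combining with \eqref{StabilityEstimate} concludes the proof.
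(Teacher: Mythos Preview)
Your overall architecture is exactly the paper's: write $\mrm{A}^{\eps}(u^{\eps}-u^{\eps}_{\mrm{as}})=g^{\eps}$, bound the consistency functional $g^{\eps}$ in the dual norm, and invoke the stability estimate. Two points, however, diverge from the paper and deserve correction.

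First, your ``third'' (interface) contribution is a misreading. The whole purpose of the modified ansatz \eqref{DefAnsatzBis1}--\eqref{DefAnsatzBis2} is precisely to produce a global $\mH^1$ function whose normal derivative is continuous across $\Upsilon^{\eps}_m$; the paper verifies $\partial_y u^{\eps}_{\mrm{as}}(x,1^-)=\partial_y u^{\eps}_{\mrm{as}}(x,1^+)$ and $\partial_\nu u^{\eps}_{\mrm{as}}=0$ on $\partial\Om^{\eps}\cap\partial\Om^{\eps}_L$, so that after integration by parts the residual pairing collapses to the purely volumetric expression \eqref{CalculSup}. There is no surviving interface term to estimate. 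The $O(\eps^2)$ jump you cite refers to the \emph{original} ansatz \eqref{DefAnsatz}, which is discarded for exactly this reason.

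Second, and more substantively, you do not identify the analytic device that actually delivers the power $\eps^{3/2}(1+|\ln\eps|)$ in $\Om^0_L$. The paper does \emph{not} estimate $\|\zeta_m V^1_m(\eps^{-1}\cdot)\|$ in any norm; instead it writes the volumetric residual as a sum of commutator terms (see \eqref{defBigRemainder}), each supported in the annulus $\mrm{Q}_\eps(M_m)=\mrm{D}_{2\eps}(M_m)\setminus\overline{\mrm{D}_\eps(M_m)}$, and pairs them against the test function $v^{\eps}$ using the logarithmic Hardy inequality
\[
\|r_m^{-1}(1+|\ln r_m|)^{-1}v^{\eps}\|_{\mrm{L}^2(\Om^0_L)}\le C\|v^{\eps}\|_{\mH^1(\Om^{\eps}_L)}.
\]
The boundary-layer contribution is handled not by rescaling but by subtracting the explicit logarithm, $\tilde{V}^1_m:=V^1_m-a_m\ln(\eps^{-1}r_m)$, which by \eqref{DecompoStripHPlane} decays like $|\xi|^{-1}$; the commutator $[\Delta,\zeta_m]\tilde V^1_m$ then lives on a fixed (not $\eps$-dependent) annulus and is small because $\tilde V^1_m$ is. Without the Hardy inequality your sketch has no mechanism to extract the factor $\eps^{3/2}$ from terms like $[\Delta,\zeta^{\eps}_m](\tilde u^0_m+\eps\tilde u^1_m)$, whose $\mL^2$ norm alone is only $O(\eps)$; it is the pairing with $v^{\eps}$ through Hardy that gains the missing $\eps^{1/2}(1+|\ln\eps|)$.
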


\begin{proof}
Inequality (\ref{StabilityEstimate}) with $\tau=0$ already provides a stability estimate. Therefore, to obtain (\ref{errorEstimate}), it just remains to compute the consistency error
\begin{equation}\label{defSup}
\|\mrm{A}^{\eps}(u^{\eps}-u^{\eps}_{\mrm{as}})\|_{\mH^1(\Om^{\eps}_L)} = \underset{v^{\eps}\in\mH^1(\Om^{\eps}_L)\,|\,\|v^{\eps}\|_{\mH^1(\Om^{\eps}_L)}=1}{\sup}\ |(\mrm{A}^{\eps}(u^{\eps}-u^{\eps}_{\mrm{as}}),v^{\eps})_{\mH^1(\Om^{\eps}_L)}|.
\end{equation}
We have $(\mrm{A}^{\eps}u^{\eps},v^{\eps})_{\mH^1(\Om^{\eps}_L)}=(f^{\eps},v^{\eps})_{\mH^1(\Om^{\eps}_L)}=\langle\partial_{\nu} w^+-\mrm{T}^{+}w^+,\overline{v^{\eps}}\rangle_{\Sigma_{L}}+\langle\partial_{\nu} w^+-\mrm{T}^{-}w^+,\overline{v^{\eps}}\rangle_{\Sigma_{-L}}$. From definition (\ref{defOpA}) of $\mrm{A}^{\eps}$, we deduce 
\[
\begin{array}{lcl}
(\mrm{A}^{\eps}(u^{\eps}-u^{\eps}_{\mrm{as}}),v^{\eps})_{\mH^1(\Om^{\eps}_L)} & = & -\dsp\int_{\Om^{\eps}_L} \nabla u^{\eps}_{\mrm{as}}\cdot\overline{\nabla v^{\eps}}\,dz+k^2\int_{\Om^{\eps}_L} u^{\eps}_{\mrm{as}}\,\overline{v^{\eps}}\,dz\\[10pt]
& & +\langle\partial_{\nu} w^++\mrm{T}^{+}(u^{\eps}_{\mrm{as}}-w^+),\overline{v^{\eps}}\rangle_{\Sigma_{L}}+\langle\partial_{\nu} w^++\mrm{T}^{-}(u^{\eps}_{\mrm{as}}-w^+),\overline{v^{\eps}}\rangle_{\Sigma_{-L}}.
\end{array}
\]
Note that $\partial_yu^{\eps}_{\mrm{as}}(x,1^-)=\partial_yu^{\eps}_{\mrm{as}}(x,1^+)$ for $x\in(-\eps/2+x_m;x_m+\eps/2)$ and that $\partial_{\nu}u^{\eps}_{\mrm{as}}=0$ on $\partial\Om^{\eps}\cap\partial\Om^{\eps}_L$. Since $u^{\eps}_{\mrm{as}}=u^0+\eps\,u^1=w^++\eps\,u^1$ on $\Sigma_{\pm L}$ where $u^1$ is outgoing, integrating by parts yields
\begin{equation}\label{CalculSup}
(\mrm{A}^{\eps}(u^{\eps}-u^{\eps}_{\mrm{as}}),v^{\eps})_{\mH^1(\Om^{\eps}_L)}  = \dsp\int_{\Om^{\eps}_L} (\Delta u^{\eps}_{\mrm{as}}+k^2 u^{\eps}_{\mrm{as}})\,\overline{v^{\eps}}\,dz.
\end{equation}
If $\zeta$, $\varphi$ are two functions, we define the commutator $[\Delta,\zeta](\varphi):=\Delta(\zeta \varphi)-\zeta\Delta \varphi=2\nabla\varphi\cdot\nabla \zeta+ \varphi\Delta \zeta$. In $\Om^0_L$, we have
\begin{equation}\label{defBigRemainder}
\begin{array}{ll}
&\Delta u^{\eps}_{\mrm{as}}+k^2 u^{\eps}_{\mrm{as}}\\[2pt] 
 = & -\dsp\sum_{m=1}^3[\Delta,\zeta^{\eps}_m](\tilde{u}^0_m+\eps\tilde{u}^1_m) + \dsp k^2\sum_{m=1}^3\zeta^{\eps}_m(z)(u^{0}(M_m)+(x-x_m)\partial_x u^{0}(M_m)+\eps\,u^{1}(M_m))\\[10pt]
 &+ \eps\dsp\sum_{m=1}^3([\Delta,\zeta_m]+k^2\zeta_m)\tilde{V}^1_m(\eps^{-1}(z-M_m)) + \eps k^2\dsp\sum_{m=1}^3\zeta^{\eps}_m(z)a_m\ln r_m
\end{array}
\end{equation}
where $a_m$ is defined in (\ref{defAm}) and where 
\[
\begin{array}{lcl}
\tilde{u}^0_m=u^0-\left( u^0(M_m)+(x-x_m)\partial_x u^{0}(M_m)\right)=w^+-\left( w^+(M_m)+(x-x_m)\partial_x w^+(M_m)\right),\\[6pt]
\tilde{u}^1_m=u^1-u^1(M_m),\\[6pt]
\tilde{V}^1_m=V^1_m-a_m\ln(\eps^{-1} r_m).
\end{array}
\]
The next step consists in studying the contribution of each of the terms on the right hand side of (\ref{defBigRemainder}). To shorten the presentation, we will consider only the first one. The analysis to deal with the other terms is very similar. To proceed, we will use the following Hardy inequality with logarithm (see \cite{HaLP52}): for $v^{\eps}\in\mH^1(\Om^{\eps}_L)$ such that $\|v^{\eps}\|_{\mH^1(\Om^{\eps}_L)}=1$, denoting $r_m=|z-M_m|$, we have 
\begin{equation}\label{HardyIneg}
\|r_m^{-1}(1+|\ln r_m|)^{-1}v^{\eps}\|_{\mrm{L}^2(\Om^0_L)}\le C.
\end{equation}
Define the annulus $\mrm{Q}_{\eps}(M_m)=\mrm{D}_{2\eps}(M_m)\setminus\overline{\mrm{D}_{\eps}(M_m)}$. Observe that $[\Delta,\zeta^{\eps}_m](\tilde{u}^0_m+\eps\tilde{u}^1_m)$ is supported in $\overline{\mrm{Q}_{\eps}(M_m)}$.   Moreover, in this domain, we have $\tilde{u}^0_m(z)=O(r_m^2)$, $\nabla\tilde{u}^0_m(z)=O(r_m)$, $|\Delta\zeta^{\eps}_m|\le C\,\eps^{-2}$ and $|\nabla\zeta^{\eps}_m|\le C\,\eps^{-1}$. Using also the estimate\footnote{This (non-optimal) result can be obtained rigorously working in weighted Sobolev (Kondratiev) spaces \cite{Kond67}.} $\|r_m^{-1/2}\nabla\tilde{u}^1_m\|_{\mrm{L^2(\Om^0_L)}}+\|r_m^{-3/2}\tilde{u}^1_m\|_{\mrm{L^2(\Om^0_L)}}\le C$, we can write
\begin{equation}\label{FirstTermRhsTrav1}
\begin{array}{ll}
 & |([\Delta,\zeta^{\eps}_m](\tilde{u}^0_m+\eps\tilde{u}^1_m),v^{\eps})_{\Om^0_L}| \\[6pt]
 \le & \phantom{+}C\eps^{-2}\|r_m(1+|\ln r_m|)(\tilde{u}^0_m+\eps\tilde{u}^1_m)\|_{\mrm{L}^2(\mrm{Q}_{\eps}(M_m))}\|r_m^{-1}(1+|\ln r_m|)^{-1}v^{\eps}\|_{\mrm{L}^2(\mrm{Q}_{\eps}(M_m))}\\[6pt]
   & +C\eps^{-1}\|r_m(1+|\ln r_m|)\nabla(\tilde{u}^0_m+\eps\tilde{u}^1_m)\|_{\mrm{L}^2(\mrm{Q}_{\eps}(M_m))}\|r_m^{-1}(1+|\ln r_m|)^{-1}v^{\eps}\|_{\mrm{L}^2(\mrm{Q}_{\eps}(M_m))}\\[6pt]
 \le & C\,\eps^{3/2}(1+|\ln\eps|).   
\end{array}
\end{equation}
Proceeding similarly to study all the terms on the right hand side of (\ref{defBigRemainder}), we find 
\begin{equation}\label{EstimRemainder1}
|(\Delta u^{\eps}_{\mrm{as}}+k^2 u^{\eps}_{\mrm{as}},v^{\eps})_{\Om^0_L}|\le  C\,\eps^{3/2}(1+|\ln\eps|). 
\end{equation}
In $\mathscr{S}^{\eps}_m$, for $m=1,2,3$, we have $\Delta u^{\eps}_{\mrm{as}}+k^2 u^{\eps}_{\mrm{as}} = \tilde{f}^1_m$ with 
\[
\tilde{f}^1_m(z) = \eps ([\Delta_z,\zeta_m]+k^2\zeta_m)(\ln\eps\,a_m+V^1_m(\eps^{-1}(z-M_m)).
\]
Observing that $|\tilde{f}^1_m(z)|\le C\,\eps(1+|\ln\eps|)$ for $z=(x,y)\in \mathscr{S}^{\eps}_m$, we can write
\begin{equation}\label{EstimRemainder2}
|(\Delta u^{\eps}_{\mrm{as}}+k^2 u^{\eps}_{\mrm{as}},v^{\eps})_{\mathscr{S}^{\eps}_m}| \le \|\Delta u^{\eps}_{\mrm{as}}+k^2 u^{\eps}_{\mrm{as}}\|_{\mrm{L}^2(\mathscr{S}^{\eps}_m)}\,\|v^{\eps}\|_{\mrm{L}^2(\mathscr{S}^{\eps}_m)} \ \le \ C\,\eps^{3/2}(1+|\ln\eps|).
\end{equation}
Using (\ref{EstimRemainder1}), (\ref{EstimRemainder2}) in (\ref{defSup}), (\ref{CalculSup}), we find $\|\mrm{A}^{\eps}(u^{\eps}-u^{\eps}_{\mrm{as}})\|_{\mH^1(\Om^{\eps}_L)} \le C\,\eps^{3/2}(1+|\ln\eps|)$. With the stability estimate (\ref{StabilityEstimate}), we deduce $\|u^{\eps}-u^{\eps}_{\mrm{as}}\|_{\mH^1(\Om^{\eps}_L)} \le C\,\eps^{3/2}(1+|\ln\eps|)$.
\end{proof}

\subsection{Error estimates with respect to $\tau$}\label{paragraphDefiRemainder}

Now, we have all the tools to establish estimate (\ref{MainEstimate}). In the following, as in the classical proofs of perturbations theory for linear operators (see \cite[Chapter 7, \S6.5]{Kato95}, \cite[Chap. 4]{HiPh57}), we will use a change of variables to compare the solutions of (\ref{ReformulationRiesz}) in the same geometry $\Om^\eps_L(0)=\Om^\eps_L$. To proceed, introduce some smooth diffeomorphism $\mathscr{L}(\tau)$ which maps $\Om^{\eps}_L$ into $\Om^{\eps}_L(\tau)$. Note that we can take $\mathscr{L}(\tau)$ acting only on the $y$ variable and therefore, independent of $\eps$. We can also assume that the global change of variables is identical in a neighbourhood of $\overline{\Om^0_L}$. \\
\newline
For $\tau\in(-\pi/(2k);\pi/(2k))^3$, define the function $u^{\eps}_{\mrm{as}}(\tau)$ as in (\ref{DefAnsatzBis1}), (\ref{DefAnsatzBis2}) where the terms $u^1$, $V^1_m$, $v^1_m$ are computed in \S\ref{describExpansion} with $h_m$ replaced by $h_m+\tau_m$. Set $\tilde{u}^{\eps}(\tau)=u^{\eps}(\tau)-u^{\eps}_{\mrm{as}}(\tau)$. With the Riesz representation theorem, introduce the function $\tilde{f}^{\eps}(\tau)\in\mH^1(\Om^{\eps}_L(\tau))$, such that, for all $\varphi'\in\mH^1(\Om^{\eps}_L(\tau))$,
\[
(\tilde{f}^{\eps}(\tau),\varphi')_{\mH^1(\Om^{\eps}_L(\tau))}=\dsp\int_{\Om^{\eps}_L(\tau)} (\Delta u^{\eps}_{\mrm{as}}(\tau)+k^2 u^{\eps}_{\mrm{as}}(\tau))\,\overline{\varphi'}\,dz.
\]
According to (\ref{CalculSup}), the remainder $\tilde{u}^{\eps}(\tau)$ satisfies $\mrm{A}^{\eps}(\tau)\tilde{u}^{\eps}(\tau)=\tilde{f}^{\eps}(\tau)$ in $\mrm{H}^1(\Om^{\eps}_L(\tau))$. Define $\tilde{U}^{\eps}(\tau):=\tilde{u}^{\eps}(\tau)\circ \mathscr{L}(\tau)$ and  $\tilde{F}^{\eps}(\tau):=\tilde{f}^{\eps}(\tau)\circ \mathscr{L}(\tau)$.  We have 
\begin{equation}\label{line1}
\|\tilde{F}^{\eps}(\tau)-\tilde{F}^{\eps}(\tau')\|_{\mrm{H}^1(\Om^{\eps}_L)} \le  C\,\eps^{3/2}(1+|\ln\eps|)\,|\tau-\tau'|
\end{equation}
for all $\eps\in(0;\eps_{0}]$, $\tau$, $\tau'\in[-\Theta;\Theta]^3$, where $\Theta$ is set in $(0;\pi/(2k))$. On the other hand, under the change of variables $\mathscr{L}(\tau)$, $\mrm{A}^{\eps}(\tau):  \mrm{H}^1(\Om^{\eps}_L(\tau)) \rightarrow  \mrm{H}^1(\Om^{\eps}_L(\tau))$ is transformed into the operator $\mathscr{A}^{\eps}(\tau): \mrm{H}^1(\Om^{\eps}_L) \rightarrow  \mrm{H}^1(\Om^{\eps}_L)$. In particular, there holds 
\begin{equation}\label{EqnEquiv}
\mrm{A}^{\eps}(\tau)\tilde{u}^{\eps}(\tau)=\tilde{f}^{\eps}(\tau) \qquad \Leftrightarrow\qquad\mathscr{A}^{\eps}(\tau)\tilde{U}^{\eps}(\tau)=\tilde{F}^{\eps}(\tau).
\end{equation}
Moreover, the coefficients of $\mathscr{A}^{\eps}(\tau)$ depend smoothly on the parameter $\tau\in[-\Theta;\Theta]^3$. Therefore, we have the estimate 
\begin{equation}\label{LipschitzOperator}
\|\mathscr{A}^{\eps}(\tau)-\mathscr{A}^{\eps}(\tau')\|\le C\,|\tau-\tau'|,\qquad\forall\eps\in(0;\eps_{0}],\ \tau,\,\tau'\in[-\Theta;\Theta]^3.
\end{equation}
For $\tau\in[-\Theta;\Theta]^3$, we can write
\[
\mathscr{A}^{\eps}(\tau)=\mathscr{A}^{\eps}(0)(\mrm{Id}+(\mathscr{A}^{\eps}(0))^{-1}(\mathscr{A}^{\eps}(\tau)-\mathscr{A}^{\eps}(0))).
\]
Since $(\mathscr{A}^{\eps}(0))^{-1}=(\mrm{A}^{\eps}(0))^{-1}$ is uniformly bounded for $\eps\in(0;\eps_{0}]$ (inequality (\ref{StabilityEstimate}) with $\tau=0$), we deduce that there is some $\vartheta\in(0;\pi/(2k))$ such that $\mathscr{A}^{\eps}(\tau)$ is uniformly invertible for all $\eps\in(0;\eps_{\vartheta}],\ \tau,\,\tau'\in[-\vartheta;\vartheta]^3$. In particular, this implies 
\begin{equation}\label{line0}
\|\tilde{U}^{\eps}(\tau)\|_{\mrm{H}^1(\Om^{\eps}_L)} \le  C\,\eps^{3/2}(1+|\ln\eps|),\qquad\forall\eps\in(0;\eps_{\vartheta}],\ \tau,\,\tau'\in[-\vartheta;\vartheta]^3.
\end{equation}
Then, from 
\[
\begin{array}{ll}
 & \mathscr{A}^{\eps}(\tau)\tilde{U}^{\eps}(\tau)-\mathscr{A}^{\eps}(\tau')\tilde{U}^{\eps}(\tau') = \tilde{F}^{\eps}(\tau)-\tilde{F}^{\eps}(\tau') \\[6pt]
\Leftrightarrow & \mathscr{A}^{\eps}(\tau)(\tilde{U}^{\eps}(\tau)-\tilde{U}^{\eps}(\tau'))
=(\mathscr{A}^{\eps}(\tau')-\mathscr{A}^{\eps}(\tau))\tilde{U}^{\eps}(\tau') + \tilde{F}^{\eps}(\tau)-\tilde{F}^{\eps}(\tau'),
\end{array}
\]
together with (\ref{line0}), (\ref{line1}) and (\ref{LipschitzOperator}), we obtain 
\[
\|\tilde{U}^{\eps}(\tau)-\tilde{U}^{\eps}(\tau')\|_{\mH^1(\Om^{\eps}_L)} \le C\,\eps^{3/2}(1+|\ln\eps|)\,|\tau-\tau'|,\qquad\forall\eps\in(0;\eps_{\vartheta}],\ \tau,\,\tau'\in[-\vartheta;\vartheta]^3.
\] 
Finally, remarking that $\tilde{U}^{\eps}(\tau)=\tilde{u}^{\eps}(\tau)$ and $\tilde{U}^{\eps}(\tau')=\tilde{u}^{\eps}(\tau')$ in $\Om^0$ (we remind the reader that $\mathscr{L}(\tau)$ is the identity in $\Om^0_L$), we infer
\[
\begin{array}{lcl}
|\tilde{s}^{\eps\pm}(\tau)-\tilde{s}^{\eps\pm}(\tau')| & = & \left|\dsp\int_{\Sigma_{-L}\cup\Sigma_{+L}} \frac{\partial (\tilde{u}^{\eps}(\tau)-\tilde{u}^{\eps}(\tau'))}{\partial\nu}\,\overline{w^{\pm}}- (\tilde{u}^{\eps}(\tau)-\tilde{u}^{\eps}(\tau'))\frac{\partial\overline{w^{\pm}}}{\partial\nu}\,d\sigma\right|\\[18pt]
& = &  \left|\dsp\int_{\Sigma_{-L}\cup\Sigma_{+L}} \frac{\partial (\tilde{U}^{\eps}(\tau)-\tilde{U}^{\eps}(\tau'))}{\partial\nu}\,\overline{w^{\pm}}- (\tilde{U}^{\eps}(\tau)-\tilde{U}^{\eps}(\tau'))\frac{\partial\overline{w^{\pm}}}{\partial\nu}\,d\sigma\right|\\[13pt]
 & \le & C\,\eps^{3/2}(1+|\ln\eps|)\,|\tau-\tau'|.
\end{array}
\]
We emphasize that the constant $C>0$ appearing in the last inequality above is independent of $\eps\in(0;\eps_{\vartheta}]$, $\tau$, $\tau'\in[-\vartheta;\vartheta]^3$. This ends to prove estimate (\ref{MainEstimate}).

\section{Conclusion and discussion}\label{SectionConclu}

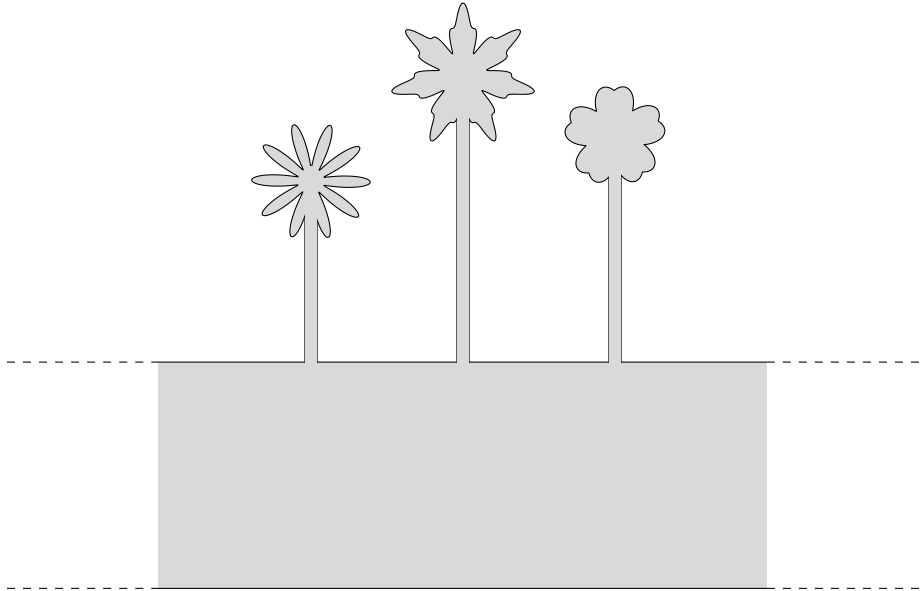
\begin{figure}[!ht]
\centering
\begin{tikzpicture}[scale=2]
\draw[fill=gray!30,draw=none](-2,0.5) rectangle (2,2);
\draw (-2,0.5)--(2,0.5); 
\draw[dashed] (-3,0.5)--(-2,0.5); 
\draw[dashed] (3,0.5)--(2,0.5); 
\draw (-2,2)--(2,2);  
\draw[dashed] (3,2)--(2,2);
\draw[dashed] (-3,2)--(-2,2);
\draw[fill=gray!30](-0.04,2) rectangle (0.04,3.8);
\draw[fill=gray!30](-0.96,2) rectangle (-1.04,3.3);
\draw[fill=gray!30](0.96,2) rectangle (1.04,3.5);
\begin{scope}[shift={(0,3.9)},scale=0.2]
\draw [domain=0:360, black, samples=300, smooth,fill=gray!30]
plot ({cos(\x)*(1.6-0.8*sin(7*\x)^3)},{sin(\x)*(1.6-0.8*sin(7*\x)^3)});
\end{scope}
\begin{scope}[shift={(-1,3.2)},scale=0.3]
\draw [domain=0:360, black, samples=300, smooth,fill=gray!30]
plot (\x:{0.8+sin(5*(\x-26))*cos(5*(\x-26))});
\end{scope}
\begin{scope}[shift={(1,3.5)},scale=0.1]
\draw [domain=0:360, black, samples=300, smooth,fill=gray!30]
plot (\x:{2+abs(sin(2.5*(\x-55))*cos(2.5*(\x-55)))+abs(sin(2.5*(\x-55)))});
\end{scope}
\draw[fill=gray!30,draw=none](-0.037,1.9) rectangle (0.037,3.8);
\draw[fill=gray!30,draw=none](-0.963,1.9) rectangle (-1.037,3.3);
\draw[fill=gray!30,draw=none](0.963,1.9) rectangle (1.037,3.5);
\end{tikzpicture}
\caption{Invisible garden of flowers.\label{InvisibleGardenOfFlowers}} 
\end{figure}

\noindent In the first part of the article, we showed that for a given perturbed waveguide, transmission invisibility ($T=1$), cannot be achieved for wavenumbers smaller than a given bound $k_{\star}$ (see Proposition \ref{MainProposition}). This parameter $k_{\star}$ depends on the geometry of the perturbation. In particular, if the perturbation is smooth and small (in amplitude and in width), $k_{\star}$ is very close to the threshold wavenumber $\lambda_1$ (equal to $\pi^2$ when the reference waveguide is $\R\times(0;1)$). In the second part of the article, for any given wavenumber $k$ such that $0<k^2<\lambda_1$, we proposed a method to construct waveguides where there holds $R=0$ and $T=1$ (Proposition \ref{propositionMainResult}). Therefore, in these waveguides, the incident wave $w^+$ goes through and produces a scattered field which is exponentially decaying at $\pm\infty$. To proceed, and to circumvent the obstruction highlighted in the first part of the paper, we worked with singular perturbations of the geometry. More precisely, we added thin rectangles to the reference waveguide and we played with their height (parametrized by the vector $\tau$). Similarly, one could have tuned the parameter $x_m$ (the abscissa of the rectangles) looking for $x_m$ under the form $x_m(\tau)=x_m(0)+\tau_m$. Note also that we played with thin rectangles only to simplify as far as possible the analysis. Using the same method and tuning the height of the stalks, one could construct invisible gardens of flowers as in Figure \ref{InvisibleGardenOfFlowers}. The only assumption which has to be satisfied is that $k^2$ is not a resonance frequency of the flowers (in our approach, this condition was used just after (\ref{Pb1D})).\\
\newline 
One possible direction to continue the analysis is to consider the multi-modal case, that is a setting where the wavenumber is chosen so that several modes can propagate in the waveguide. In this situation, instead of having to cancel two coefficients $s^{\pm}$, one has to cancel two matrices $(s^{\pm}_{mn})_{0\le m,n \le N-1}$ (note that $(s^{-}_{mn})_{0\le m,n \le N-1}$ is symmetric), where $N$ is the number of propagative incident waves. Of course, for such a problem it is necessary to introduce new degrees of freedom playing with more than three flowers. Adapting the technique presented in \cite[\S8.2]{ChNa16}, one may achieve to impose $s^{-}_{mn}=0$ for $m,n=0,\dots,N-1$ except for a discrete set of wavenumbers where the technique fails (the reason being that the equivalent of the matrix $\mathbb{M}$ defined in (\ref{defMatrixM}) is not invertible). It would be interesting to understand if for these pathological configurations there is a real obstruction to non reflectivity or, if this is only our approach which is inefficient. Imposing $s^{+}_{mn}=0$ for $m,n=0,\dots,N-1$ when several modes can propagate in the waveguide seems to be an even more challenging problem. In that situation, for all wavenumbers, the equivalent of the matrix $\mathbb{M}$ is not invertible.

\section*{Acknowledgments}
The research of L. C. was supported by the FMJH through the grant ANR-10-CAMP-0151-02 in the ``Programme des Investissements
d'Avenir''. The work of S.A. N. was supported by the grant No. 15-01-02175 of Russian Foundation on Basic Research as well as by the LabEx LMH, through the grant ANR-11-LABX-0056-LMH in the ``Programme des Investissements d'Avenir''.

\bibliography{Bibli}
\bibliographystyle{plain}

\end{document}